\newtheorem{theorem}{Theorem}
\theoremstyle{plain}
\newtheorem{corollary}{Corollary}
\newtheorem{definition}{Definition}
\newtheorem{example}{Example}
\newtheorem{lemma}{Lemma}
\newtheorem{proposition}{Proposition}
\newtheorem{remark}{Remark}
\numberwithin{equation}{section}
\begin{document}
\title[Archimedean zeta functions for analytic mappings ]{Poles of Archimedean zeta functions for analytic mappings}
\author{E. Le\'{o}n-Cardenal}
\address{Centro de Investigaci\'{o}n y de Estudios Avanzados del Instituto
Polit\'{e}cnico Nacional\\
Departamento de Matem\'{a}ticas- Unidad Quer\'{e}taro\\
Libramiento Norponiente \#2000, Fracc. Real de Juriquilla. Santiago de
Quer\'{e}taro, Qro. 76230\\
M\'{e}xico.}
\email{eleon@math.cinvestav.mx}
\author{Willem Veys}
\address{University of Leuven, Department of Mathematics\\
Celestijnenlaan 200 B, B-3001 Leuven (Heverlee), Belgium}
\email{wim.veys@wis.kuleuven.be}
\author{W. A. Z\'{u}\~{n}iga-Galindo}
\address{Centro de Investigaci\'{o}n y de Estudios Avanzados del Instituto
Polit\'{e}cnico Nacional\\
Departamento de Matem\'{a}ticas- Unidad Quer\'{e}taro\\
Libramiento Norponiente \#2000, Fracc. Real de Juriquilla. Santiago de
Quer\'{e}taro, Qro. 76230\\
M\'{e}xico.}
\email{wazuniga@math.cinvestav.edu.mx}
\thanks{The third author was partially supported by Conacyt, Grant \# 127794.}
\subjclass{Primary: 11M41, Secondary: 32S05, 14B05, 14M25}
\keywords{Local zeta functions, analytic mappings, log canonical thre\-shold,
log-principalization of ideals, Newton polyhedra, toric varieties, decoupages}

\begin{abstract}
Let $\boldsymbol{f}=$ $\left(  f_{1},\ldots,f_{l}\right)  :U\rightarrow K^{l}%
$, with $K=\mathbb{R}$ or $\mathbb{C}$, be a $K$-analytic mapping defined on
an open set $U$ $\subset K^{n}$, and let $\Phi$ be a smooth function on $U$
with compact support. In this paper, we give a description of the possible
poles of the local zeta function attached to $\left(  \boldsymbol{f}%
,\Phi\right)  $ in terms of a log-principalization of the ideal $\mathcal{I}%
_{\boldsymbol{f}}=(f_{1},\ldots,f_{l})$. When $\boldsymbol{f}$ is a
non-degenerate mapping, we give an explicit list for the possible poles of
$Z_{\Phi}(s,\boldsymbol{f})$ in terms of the normal vectors to the supporting
hyperplanes of a Newton polyhedron attached to $\boldsymbol{f}$, and some
additional vectors (or rays) that appear in the construction of a simplicial
conical subdivision of the first orthant. These results extend the
corresponding results of Varchenko to the case $l\geq1$, and $K=\mathbb{R}$ or
$\mathbb{C}$. In the case $l=1$ and $K=\mathbb{R}$, Denef and Sargos proved
that the candidates poles induced by the extra rays required in the
construction of a simplicial conical subdivision can be discarded from the
list of candidate poles. We extend the Denef-Sargos result arbitrary $l\geq1$.
This
yields in general a much shorter list of candidate poles, that can moreover be
read off immediately from $\Gamma\left(  \boldsymbol{f}\right)  $.

\end{abstract}
\maketitle

\section{Introduction}

We take $K=\mathbb{R}$ or $\mathbb{C}$. Let $\boldsymbol{f}=$ $\left(
f_{1},\ldots,f_{l}\right)  :U\rightarrow K^{l}$ be a $K$-analytic mapping
defined on an open $U$ in $K^{n}$. Let $\Phi:U\rightarrow\mathbb{C}$ be a
smooth function on $U$ with compact support. Then the local zeta function
attached to $\left(  \boldsymbol{f},\Phi\right)  $ is defined as
\[
Z_{\Phi}(s,\boldsymbol{f})=\int\limits_{K^{n}\smallsetminus\boldsymbol{f}%
^{-1}\left(  0\right)  }\Phi\left(  x\right)  \left\vert \boldsymbol{f}%
(x)\right\vert _{K}^{s}|dx|,
\]
for $s\in\mathbb{C}$ with $\operatorname{Re}(s)>0$, where $|dx|$ is the Haar
measure on $K^{n}$. The local zeta functions have a meromorphic continuation
to the whole complex plane. In the case $l=1$, the meromorphic continuation of
$Z_{\Phi}(s,\boldsymbol{f})$ was established jointly by Bernstein and Gel'fand
\cite{B-G}, independently by Atiyah \cite{At}, then by a different method by
Bernstein \cite{Ber}. In \cite{Igusa1}, see also \cite{Igusa3}, Igusa
developed a uniform theory for local zeta functions over local fields of
characteristic zero. In this context, there exist asymptotic expansions for
oscillatory integrals depending on one parameter which are controlled by the
poles of `twisted versions' of $Z_{\Phi}(s,\boldsymbol{f})$, see also
\cite{AVG}, \cite{Var}. In \cite{P-St} , with $l\geq1$ and $K=$ $\mathbb{C}$,
Phong and Sturm studied the stability of the poles of $Z_{\Phi}%
(s,\boldsymbol{f})$ under small perturbations of $\boldsymbol{f}$.

In this paper, we give a geometric description of the possible poles of
$Z_{\Phi}(s,\boldsymbol{f})$, including the largest one, in terms of a
log-principalization of the ideal $\mathcal{I}_{\boldsymbol{f}}=(f_{1}%
,\ldots,f_{l})$, see Theorem \ref{Th1} and Proposition \ref{prop1}. To the
best of our knowledge a such description has not been reported before, in the
Archimedean context. (See however Remark \ref{real case} and Proposition
\ref{princ-resol} when $K=\mathbb{R}$.) When $\boldsymbol{f}$ is a
non-degenerate mapping in the sense of \cite{V-Z}, we give an explicit list
for the possible poles of $Z_{\Phi}(s,\boldsymbol{f})$ in terms of the normal
vectors to the supporting hyperplanes of a Newton polyhedron attached to
$\boldsymbol{f}$, and additional vectors (or rays) that appear in the
construction of a simplicial conical subdivision of the first orthant, see
Theorem \ref{Theorem2a}. These results extend the corresponding results of
Varchenko in \cite{Var} for $l=1$ and $K=\mathbb{R}$ to the case $l\geq1$, and
$K=\mathbb{R}$ or $\mathbb{C}$. In the case $l=1$ and $K=\mathbb{R}$, Denef
and Sargos proved in \cite{D-S} a strong and interesting result: the candidate
poles induced by the extra rays required in the construction of a simplicial
conical subdivision can be discarded from the list of candidate poles in
Theorem \ref{Theorem2a}. We extend the Denef-Sargos result to arbitrary
$l\geq1$, see Theorems \ref{Theorem3}, \ref{Theorem4}, and \ref{Theorem5}.
This yields in general a much shorter list of candidate poles, that can
moreover be read off immediately from $\Gamma\left(  \boldsymbol{f}\right)  $.

At this point we must mention that in the case $K=\mathbb{R}$ a description of
the poles of $Z_{\Phi}(s,\boldsymbol{f})$ can be obtained by using an embedded
resolution of singularities of $\sum_{i=1}^{l}f_{i}^{2}$, see \cite{AVG},
\cite{Igusa3}, \cite{Var}. In particular, one could define $\boldsymbol{f}$ to
be non-degenerate as meaning $\sum_{i=1}^{l}f_{i}^{2}$ to be non-degenerate in
the usual sense, and thus one could use all the results of \cite{Var}. We note
however that there exist many mappings $\boldsymbol{f} $ which are
non-degenerate in the sense of our Definition \ref{def1} but such that
$\sum_{i=1}^{l}f_{i}^{2}$ is degenerate in the usual sense, see Remark
\ref{note}. Thus, our approach gives a finer explicit description of the poles
of $Z_{\Phi}(s,\boldsymbol{f})$.



\section{Local Zeta Functions for Analytic Mappings}

\subsection{\label{fixdata}Fixing the data}

We take $K=\mathbb{R}$ or $\mathbb{C}$. For $a=\left(  a_{1},\ldots
,a_{n}\right)  \in K^{n}$ we put \ $\left\vert a\right\vert _{K}=$ $\left\vert
a\right\vert _{\mathbb{R}}$ or $\left\vert a\right\vert _{\mathbb{C}}^{2}$,
where $\left\vert \cdot\right\vert _{\mathbb{R}}$ and $\left\vert
\cdot\right\vert _{\mathbb{C}}$ are the standard norms on $\mathbb{R}^{n}$ and
$\mathbb{C}^{n}$, respectively.

Let $f_{1},\ldots,f_{l}$ be polynomials in $K\left[  x_{1},\ldots
,x_{n}\right]  $, or, more generally, $K-$analytic functions on an open set
$U\subset K^{n}$. We consider the mapping $\boldsymbol{f}=$ $\left(
f_{1},\ldots,f_{l}\right)  :K^{n}\rightarrow K^{l}$, respectively,
$U\rightarrow K^{l}$. Let $\Phi:K^{n}\rightarrow\mathbb{C}$ \ be a smooth
function with compact support, i.e. $\Phi\in C_{0}^{\infty}$, with support in
$U$ in the second case.

\subsection{Log-principalization of ideals}

We state the version of log-principalization of ideals that we will use in
this paper, \cite{E-N-V}, see also \cite{H}, \cite{W}.

\begin{theorem}
[\cite{E-N-V}]\label{theoem1}Let $K=\mathbb{R}$ or $\mathbb{C}$ and let $U$ be
an open submanifold of $K^{n}$. Let $f_{1},\ldots,f_{l}$ be $K-$analytic
functions on $U$ such that the ideal $\mathcal{I}_{\boldsymbol{f}}=\left(
f_{1},\ldots,f_{l}\right)  $ is not trivial. Then there exists a
log-principalization $h:X_{K}\rightarrow U$ of $\mathcal{I}_{\boldsymbol{f}}$,
that is,

\noindent(1) $X_{K}$ is an $n-$dimensional $K-$analytic manifold, $h$ is a
proper $K-$analytic map which can be chosen as a composition of a finite
number of blow-ups in closed submanifolds, and which is an isomorphism outside
of the common zero set $Z_{K}$ of $f_{1},\ldots,f_{l}$;

\noindent(2) $h^{-1}\left( Z_{K}\right)  =\cup_{i\in T}E_{i}$, where the
$E_{i}$\ are closed submanifolds of $X_{K}$ of codimension one, each equipped
with a pair of positive integers $\left(  N_{i},v_{i}\right)  $ satisfying the
following. At every point $b$ of $X_{K}$ there exist local coordinates
$\left(  y_{1},\ldots,y_{n}\right)  $ on $X_{K}$ around $b$ such that, if
$E_{1},\ldots,E_{r}$ are the $E_{i}$ containing $b$, we have on some
neighborhood of $b$ that $E_{i}$ is given by $y_{i}=0$ for $i=1,\ldots,r$,
\[
h^{\ast}\left(  \mathcal{I}_{\boldsymbol{f}}\right)  \text{ is generated by
}\varepsilon\left(  y\right)
{\displaystyle\prod\limits_{i=1}^{r}}
y_{i}^{N_{i}},
\]
and
\[
h^{\ast}\left(  dx_{1}\wedge\ldots\wedge dx_{n}\right)  =\eta\left(  y\right)
\left(
{\displaystyle\prod\limits_{i=1}^{r}}
y_{i}^{v_{i}-1}\right)  dy_{1}\wedge\ldots\wedge dy_{n},
\]
where \ $\varepsilon\left(  y\right)  $, $\eta\left(  y\right)  $ are units in
the local ring of $X_{K}$\ at $b$.
\end{theorem}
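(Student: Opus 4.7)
The plan is to construct $h : X_K \to U$ as a finite composition of blow-ups along smooth closed submanifolds, following the algorithmic approach to principalization of ideals. At each stage the blow-up center will be chosen so as to reduce a suitable local invariant of the current pullback ideal until that ideal becomes locally a unit times a monomial in the exceptional divisors. Part (1) of the conclusion then follows at once from the construction, since each blow-up of a smooth center is proper and an isomorphism away from that center, and we will always choose the centers to lie above the common zero locus $Z_K$.

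For the inductive step I would fix a point $p$ of the current model of $U$ and consider the order $\nu_p(\mathcal{J})$ of the ``weak transform'' $\mathcal{J}$ of $\mathcal{I}_{\boldsymbol{f}}$, i.e.\ the ideal obtained from the pullback after factoring out the exceptional divisors accumulated so far. Following the constructive resolution algorithms of Bierstone--Milman, Encinas--Villamayor and W\l{}odarczyk, one chooses the next blow-up center inside the locus where $\nu$ is maximal, requiring it to have simple normal crossings with the accumulated exceptional divisor. A direct local computation shows that after blowing up this center, the maximal value of $\nu$ on the new model strictly decreases (in an appropriate well-founded sense), so iterating finitely many times brings $\mathcal{J}$ to the trivial case. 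At that stage the total pullback of $\mathcal{I}_{\boldsymbol{f}}$ is by construction a unit times a monomial in the exceptional components, which is the first formula in part (2).

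For the Jacobian formula I would simply track how $dx_1 \wedge \cdots \wedge dx_n$ transforms under each blow-up. Blowing up a smooth codimension-$c$ submanifold given in local coordinates by $y_1 = \cdots = y_c = 0$ introduces, on each affine chart of the blow-up, a factor $y_j^{\,c-1}$ in the pullback of the volume form, where $y_j = 0$ defines the new exceptional divisor in that chart. Summing these contributions along the whole chain of blow-ups recovers the integers $v_i - 1$ attached to each $E_i$, and the normal crossings condition at a point $b$ follows from the inductive requirement that each chosen center have SNC with the prior exceptional divisor. The main obstacle in this program is proving termination of the algorithm and verifying that the local choices patch coherently into a global proper map, uniformly in both the complex and real analytic categories; this is precisely the content of \cite{E-N-V}, which I would invoke directly rather than attempt to reprove.
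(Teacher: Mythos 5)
The paper does not prove this theorem: it is stated as a cited result from \cite{E-N-V} (Encinas--Nobile--Villamayor), together with \cite{H} and \cite{W}. Your sketch correctly outlines the standard order-reduction algorithm for principalization (choice of centers in the maximal-order locus having simple normal crossings with the accumulated exceptional divisor, strict decrease of the invariant, bookkeeping of the Jacobian factor $y^{c-1}$ per codimension-$c$ blow-up) and then explicitly defers the technical heart---termination, globalization, and validity in both the real- and complex-analytic categories---to the same reference, so it is in agreement with the paper's treatment.
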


The $\left(  N_{i},v_{i}\right)  $, $i\in T$, are called \textit{the numerical
data of }$h$\textit{\ for} $\mathcal{I}_{\boldsymbol{f}}$.

\subsection{Poles of local zeta functions}

From now on we suppose that $\boldsymbol{f}^{-1}\left(  0\right)
\neq\emptyset$. To $\boldsymbol{f}$ and $\Phi$ as in \ref{fixdata} we
associate the local zeta function $Z_{\Phi}(s,\boldsymbol{f})$, $s\in
\mathbb{C}$ with $\operatorname{Re}(s)>0$, defined in the introduction.

\begin{remark}
\textrm{\label{real case} (1) When $K=\mathbb{R}$ the zeta function $Z_{\Phi
}(s,f)$ of the mapping $f$ is clearly equal to the zeta function $Z_{\Phi
}(\frac{s}{2},F)$ of the function $F:=\sum_{i=1}^{l}f_{i}^{2}$. In particular,
it is known that $Z_{\Phi}(s,f)$ has a meromorphic continuation to the whole
complex plane, and that its poles are negative rational numbers and of order
at most $n$, see e.g. \cite{Igusa5}. Also the list of candidate poles in
Theorem \ref{Th1} below for $K=\mathbb{R}$ can in fact be derived from the
function case, see Proposition \ref{princ-resol}. But since the proof of
Theorem \ref{Th1} for $K=\mathbb{C}$ is also valid for $K=\mathbb{R}$ we
prefer to state and prove it simultaneously for both fields. }

\textrm{(2) Over $K=\mathbb{C}$ the meromorphic continuation can be
analogously reduced to the case of one real--analytic function. The point of
Theorem \ref{Th1} is the description of the candidate poles in terms of a
principalization of the ideal. }
\end{remark}

\begin{theorem}
\label{Th1}Let $\boldsymbol{f}$ and $\Phi$\ be as in \ref{fixdata}. Let $h:$
$X_{K}\rightarrow U$ be a fixed log-principa\-lization of the ideal
$\mathcal{I}_{\boldsymbol{f}}=(f_{1},\ldots,f_{l})$, with numerical data
$\left(  N_{i},v_{i}\right)  $, $i\in T$, for $\mathcal{I}_{\boldsymbol{f}}$.
Then $Z_{\Phi}\left(  s,\boldsymbol{f}\right)  $ has a meromorphic
continuation to the whole complex plane $\mathbb{C}$ and the poles are
contained in the union of%
\[
-\frac{v_{i}}{N_{i}}-\frac{\mathbb{N}}{N_{i}},\text{ }i\in T.
\]
Therefore the poles are negative rational numbers. Moreover their orders are
at most equal to $n$.
\end{theorem}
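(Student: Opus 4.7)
My plan is the classical one via a log-principalization: pull back $Z_{\Phi}(s,\boldsymbol{f})$ along $h$, reduce by a partition of unity on a compact set to finitely many monomial model integrals, and analyze these by Taylor expansion. The geometry furnished by Theorem \ref{theoem1} makes the pullback completely transparent.

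For $\operatorname{Re}(s)>0$ I first apply the change of variables formula along $h$ (valid since $h$ is an isomorphism outside the common zero set) to write
\[
Z_{\Phi}(s,\boldsymbol{f})=\int_{X_{K}\setminus h^{-1}(\boldsymbol{f}^{-1}(0))}(\Phi\circ h)(y)\,|\boldsymbol{f}(h(y))|_{K}^{s}\,|h^{\ast}(dx_{1}\wedge\cdots\wedge dx_{n})|.
\]
Near any point $b\in X_{K}$ with the coordinates furnished by Theorem \ref{theoem1}, each $f_{j}\circ h$ lies in the ideal generated by $\varepsilon\prod_{i=1}^{r}y_{i}^{N_{i}}$, hence factors as $b_{j}(y)\prod_{i=1}^{r}y_{i}^{N_{i}}$ with $b_{j}$ analytic; the identity $\varepsilon=\sum_{j}a_{j}b_{j}$ (with $\varepsilon$ a unit) forces the $b_{j}$ to have no common zero near $b$, so $|\widetilde{\boldsymbol{b}}|_{K}:=|(b_{1},\ldots,b_{l})|_{K}$ is a strictly positive smooth function. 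In the normalization $|a|_{K}\in\{|a|_{\mathbb{R}},|a|_{\mathbb{C}}^{2}\}$, a direct computation over both fields yields
\[
|\boldsymbol{f}(h(y))|_{K}^{s}=|\widetilde{\boldsymbol{b}}(y)|_{K}^{s}\prod_{i=1}^{r}|y_{i}|_{K}^{N_{i}s},\qquad |h^{\ast}(dx)|=|\eta(y)|_{K}\prod_{i=1}^{r}|y_{i}|_{K}^{v_{i}-1}|dy|.
\]

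Properness of $h$ makes $h^{-1}(\operatorname{supp}\Phi)$ compact, so it is covered by finitely many charts of this type. A smooth partition of unity decomposes $Z_{\Phi}(s,\boldsymbol{f})$ into a finite sum of integrals of the model form
\[
J(s)=\int_{K^{n}}\Psi_{s}(y)\prod_{i=1}^{r}|y_{i}|_{K}^{N_{i}s+v_{i}-1}\,|dy|,
\]
where $\Psi_{s}$ is smooth and compactly supported in $y$ and entire in $s$ (entirety uses $|\widetilde{\boldsymbol{b}}|_{K}>0$ on the support). Fixing $M\in\mathbb{N}$ and Taylor expanding $\Psi_{s}$ in $(y_{1},\ldots,y_{r})$ to order $M$, the elementary one-variable identities $\int_{0}^{1}t^{\alpha-1}\,dt=1/\alpha$ and $\int_{|z|\le 1}|z|^{2(\alpha-1)}\,|dz|=\pi/\alpha$ reduce each monomial term to a product of simple poles at $s=-(v_{i}+k_{i})/N_{i}$ with $k_{i}\in\mathbb{N}$. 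The Taylor remainder is integrable and holomorphic on $\operatorname{Re}(s)>-M/\min_{i}N_{i}$; letting $M\to\infty$ furnishes the meromorphic continuation to all of $\mathbb{C}$ with poles contained in $\bigcup_{i\in T}(-v_{i}/N_{i}-\mathbb{N}/N_{i})$. At any fixed $s_{0}$ the order of the pole of $J(s)$ is bounded by $\#\{i\le r : N_{i}s_{0}+v_{i}+k_{i}=0 \text{ for some }k_{i}\in\mathbb{N}\}\le r\le n$, and summing over the finite partition of unity preserves this bound.

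The step I expect to demand the most care is the uniform-in-$s$ control of the Taylor remainder, i.e.\ verifying that it extends to a holomorphic function on half-planes receding to $-\infty$ as $M\to\infty$, so that only the explicit monomial part contributes poles in any fixed region; the remaining manipulations are formal.
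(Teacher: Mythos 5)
Your overall route coincides with the paper's: pull back along $h$, use compactness of $h^{-1}(\operatorname{supp}\Phi)$ and a partition of unity to reduce to local monomial models, and then continue those. The paper stops at that reduction and cites Igusa (Theorem 5.4.1 of \cite{Igusa5} or Theorem 1.6 of \cite{Igusa3}) for the continuation of the $J(s)$-type integrals, while you spell out that continuation by Taylor expansion; you also give a cleaner justification (via $\sum_j a_j b_j=\varepsilon$) of the fact that the $\widetilde{f}_j$ have no common zero near $b$, which the paper just asserts. One technical point needs repair in your write-up: you claim that Taylor expanding $\Psi_s$ in $(y_1,\dots,y_r)$ to total order $M$ leaves a remainder integrable and holomorphic for $\operatorname{Re}(s)>-M/\min_i N_i$. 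A remainder that is $O(|y'|^M)$ need not vanish to high order in each $y_i$ separately (e.g.\ it could be comparable to $y_2^M$ alone), so it does not improve the convergence of the $y_1$-integral at all. The standard fix is to expand coordinate by coordinate, writing $\Psi_s(y)=\sum_{k<M}\frac{y_1^k}{k!}\partial_{y_1}^k\Psi_s(0,y_2,\dots,y_n)+y_1^M\,\rho_1(y,s)$ with $\rho_1$ smooth, continue the $y_1$-integral, and iterate in $y_2,\dots,y_r$ (with the obvious $y_i,\bar y_i$ bookkeeping over $\mathbb C$). You flag that the remainder control ``demands the most care,'' and indeed this is exactly the point to tighten; once it is phrased coordinate-wise, the rest of your argument goes through and reproduces the paper's theorem.
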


\begin{proof}
We use all the notations concerning the log-principalization $h$ introduced in
Theorem \ref{theoem1}. Let $b\in X_{K}$ be a point, and $\left(  \phi
_{V},V\right)  $ a chart containing it. Let $E_{1},\dots,E_{r}$ denote the
components of $h^{-1}\left(  \boldsymbol{f}^{-1}\left(  0\right)  \right)  $
passing through $b$. We set $\boldsymbol{f}^{\ast}\left(  y\right)
:=\boldsymbol{f}\left(  h\left(  y\right)  \right)  $.

If $r=0$, i.e., $\boldsymbol{f}\left(  h\left(  b\right)  \right)  \neq0$,
then we can choose a small neighborhood $V_{b}$ of $b$ over which $\left\vert
\boldsymbol{f}^{\ast}\left(  y\right)  \right\vert _{K}$ is positive and
$\mathbb{R}$-analytic, and thus
\begin{equation}
\left\vert \boldsymbol{f}^{\ast}\left(  y\right)  \right\vert _{K}^{s}%
=e^{s\ln\left\vert \boldsymbol{f}^{\ast}\left(  y\right)  \right\vert
_{K}\text{ }}\text{is }\mathbb{R}\text{-analytic in }y\in V_{b}\text{ and
holomorphic in }s\in\mathbb{C}. \label{cond-1}%
\end{equation}
In addition,
\begin{equation}
h^{\ast}\left(  dx_{1}\wedge\ldots\wedge dx_{n}\right)  =\eta(y)\left(
dy_{1}\wedge\ldots\wedge dy_{n}\right)  , \label{cond0}%
\end{equation}
where $\eta(y)$ is a unit of the local ring of $X_{K}$ at $b$.

If $r\geq1$, then we have in $V$ that%
\begin{equation}
f_{i}^{\ast}(y)=f_{i}\left(  h\left(  y\right)  \right)  =g(y)\widetilde
{f}_{i}(y)\text{, }i=1,\ldots,l\text{,} \label{cond1}%
\end{equation}%
\begin{equation}
g(y)=\varepsilon\left(  y\right)
{\textstyle\prod\limits_{i=1}^{r}}
y_{i}^{N_{i}}, \label{cond2}%
\end{equation}
and
\begin{equation}
h^{\ast}\left(  dx_{1}\wedge\ldots\wedge dx_{n}\right)  =\eta(y)%
{\textstyle\prod\limits_{i=1}^{r}}
y_{i}^{v_{i}-1}\left(  dy_{1}\wedge\ldots\wedge dy_{n}\right)  , \label{cond3}%
\end{equation}
where $\varepsilon\left(  y\right)  $ and $\eta(y)$ are units of the local
ring of $X_{K}$ at $b$. Furthermore, there exists an index $i_{0}$ such that
$\widetilde{f}_{i_{0}}(b)\neq0$. Then
\[
\left\vert \boldsymbol{f}^{\ast}(y)\right\vert _{K}^{s}= \left\vert
\varepsilon\left(  y\right)  \right\vert _{K}^{s} \left(
{\textstyle\prod\limits_{i=1}^{r}}
\left\vert y_{i}\right\vert _{K}^{N_{i}}\right)  ^{s} \left\vert
\widetilde{\boldsymbol{f}}(y)\right\vert _{K}^{s},
\]
where $\widetilde{\boldsymbol{f}}(y):=\left(  \widetilde{f}_{1}(y),\ldots
,\widetilde{f}_{l}(y)\right)  $, in $V$.

We can choose a small neighborhood $V_{b}$ of $b$ over which (\ref{cond1}%
)-(\ref{cond3}) \ are valid, and $\left\vert \varepsilon\left(  y\right)
\right\vert _{K}$, $\left\vert \widetilde{\boldsymbol{f}}(y)\right\vert _{K}$,
$\left\vert \eta\left(  y\right)  \right\vert _{K}$ are $\mathbb{R}$-analytic.
Then $\left\vert \varepsilon\left(  y\right)  \right\vert _{K}^{s}$,
$\left\vert \widetilde{\boldsymbol{f}}(y)\right\vert _{K}^{s}$ are
$\mathbb{R}$-analytic in $y$ for any $s\in\mathbb{C}$, and holomorphic in
$s\in\mathbb{C}$, for any $y\in V_{b}$.

Since $h^{-1}\left(  \text{support }\Phi\right)  $ is compact, we can take a
finite covering of the form $\left\{  V_{b}\right\}  $ where the $V_{b}$ are
homeomorphic under $\phi_{V}$ to the polydisc $P_{\epsilon}\left(  0\right)  $
in $K^{n}$ defined by $\left\vert y_{i}\right\vert _{K}<\epsilon$, with
$\epsilon$ sufficiently small and for $1\leq i\leq n$. By picking a smooth
partition of the unity subordinate to $\left\{  V_{b}\right\}  $, and using
the previous discussion,
\[
Z_{\Phi}\left(  s,\boldsymbol{f}\right)  =%
{\textstyle\int\limits_{X_{K}\setminus h^{-1}\left(  \boldsymbol{f}%
^{-1}\left(  0\right)  \right)  }}
\Phi^{\ast}\left(  y\right)  \left\vert \boldsymbol{f}^{\ast}\left(  y\right)
\right\vert _{K}^{s}\left\vert h^{\ast}\left(  dx_{1}\wedge\ldots\wedge
dx_{n}\right)  \right\vert
\]
becomes a finite sum\ of integrals of the following two types:%

\begin{equation}
I\left(  s\right)  :=%
{\textstyle\int\limits_{K^{n}}}
\Psi\left(  y\right)  \left\vert \boldsymbol{f}^{\ast}\left(  y\right)
\right\vert _{K}^{s}\left\vert dy\right\vert , \label{I}%
\end{equation}
where $\Psi$ is a $C_{0}^{\infty}$ function with support contained in a
polydisc $P_{\epsilon}\left(  0\right)  $ and $e^{s\ln\left\vert
\boldsymbol{f}^{\ast}\left(  y\right)  \right\vert _{K}}$ is $\mathbb{R}%
-$analytic for $y\in V_{b}$ and holomorphic in $s\in\mathbb{C}$, or
\begin{equation}
J\left(  s\right)  :=%
{\textstyle\int\limits_{K^{n}}}
\Theta\left(  y,s\right)  \left(
{\textstyle\prod\limits_{i=1}^{r}}
\left\vert y_{i}\right\vert _{K}^{N_{i}s+v_{i}-1}\right)  \left\vert
dy\right\vert , \label{J}%
\end{equation}
where $\Theta\left(  y,s\right)  $ is a $C_{0}^{\infty}$ function with support
contained in a polydisc $P_{\epsilon}\left(  0\right)  $, depending
holomorphically on $s\in\mathbb{C}$. By using the Dominated Convergence Lemma,
we have that (\ref{I}) defines a holomorphic function on the complex plane.
The meromorphic continuation and the description of the corresponding poles
for integrals $J\left(  s\right)  $ is known, see, for instance, the proofs of
Theorem 5.4.1 in \cite{Igusa5} or Theorem 1.6 in \cite{Igusa3}.
\end{proof}

\smallskip We indicate know why also the list of candidate poles in Theorem
\ref{Th1} for $K=\mathbb{R}$ can be derived from the function case. We think
this proposition has some independent interest. A note on terminology: we
still use the term log-principalization starting with one function, i.e. with
a principal ideal; usually one calls this an embedded resolution.

\begin{proposition}
\label{princ-resol} Let $\boldsymbol{f}=(f_{1},\dots,f_{l}):U\rightarrow
\mathbb{R}^{l}$ be an $\mathbb{R}$--analytic mapping on an open $U\subseteq
\mathbb{R}^{n}$. Then $h:X_{\mathbb{R}}\rightarrow U$ is a
log-principalization of the ideal $\mathcal{I}_{\boldsymbol{f}}=(f_{1}%
,\dots,f_{l})$ if and only if it is a log-principalization of the function
$F:=\sum_{i=1}^{l}f_{i}^{2}$. Moreover, when $(N_{i},v_{i}),i\in T,$ are the
numerical data of $h$ for $\mathcal{I}_{\boldsymbol{f}}$, then $(2N_{i}%
,v_{i}),i\in T,$ are the numerical data of $h$ for $F$.
\end{proposition}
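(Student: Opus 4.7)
The plan is to check both implications by a local computation in charts of $X_\mathbb{R}$. The global ingredients in the definition of log-principalization — properness of $h$, the isomorphism property outside the common zero locus, and the normal crossings structure of the exceptional divisor — coincide for $\mathcal{I}_{\boldsymbol{f}}$ and for $F$ since over $\mathbb{R}$ we have $F^{-1}(0) = Z(\mathcal{I}_{\boldsymbol{f}})$, because $\sum_{i=1}^l f_i^2 = 0$ forces each $f_i = 0$. Moreover, the Jacobian formula $h^*(dx_1 \wedge \cdots \wedge dx_n) = \eta(y) \prod y_i^{v_i-1} dy_1 \wedge \cdots \wedge dy_n$ depends only on $h$, so the $v_i$ automatically agree in both numerical data sets. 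It therefore suffices to compare the local monomialization of the two ideals. Fix a point $b \in X_\mathbb{R}$ with local coordinates $(y_1, \ldots, y_n)$ in which the exceptional components through $b$ are cut out by $y_1 = 0, \ldots, y_r = 0$.

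For the direction $(\Rightarrow)$, suppose $h$ is a log-principalization of $\mathcal{I}_{\boldsymbol{f}}$ with numerical data $(N_i, v_i)$. Then each $h^*(f_j) = \varepsilon_j(y) \prod_{i=1}^r y_i^{N_i}$ for some analytic $\varepsilon_j$, and since $h^*(\mathcal{I}_{\boldsymbol{f}})$ is generated by a unit times $\prod y_i^{N_i}$, the ideal $(\varepsilon_1, \ldots, \varepsilon_l)$ must equal the whole local ring at $b$, so some $\varepsilon_{j_0}$ is a unit at $b$. Squaring and summing gives $h^*F = \bigl(\sum_j \varepsilon_j^2\bigr) \prod y_i^{2N_i}$, and $\sum_j \varepsilon_j(b)^2 \geq \varepsilon_{j_0}(b)^2 > 0$ makes $\sum_j \varepsilon_j^2$ a unit at $b$. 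Hence $h$ is a log-principalization of $F$ with numerical data $(2N_i, v_i)$.

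For $(\Leftarrow)$, suppose $h^*F = u(y) \prod y_i^{M_i}$ with $u$ a unit. Set $N_i := \min_j \mathrm{ord}_{y_i}(h^*f_j)$ and write $h^*(f_j) = \tilde g_j(y) \prod y_i^{N_i}$, so that $\sum_j \tilde g_j^2 = u \prod y_i^{M_i - 2N_i}$ with each $M_i - 2N_i \geq 0$. If $M_{i_0} > 2N_{i_0}$ for some $i_0$, then $\sum_j \tilde g_j^2|_{y_{i_0}=0} \equiv 0$; but by minimality of $N_{i_0}$ some $\tilde g_{j_0}$ is not divisible by $y_{i_0}$, so $\tilde g_{j_0}|_{y_{i_0}=0} \not\equiv 0$, contradicting the fact that a sum of squares of real-analytic functions vanishes identically iff every summand does. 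Thus $M_i = 2N_i$ for all $i$, and $\sum_j \tilde g_j^2 = u$ is a unit at $b$; evaluating at $b$ yields some $\tilde g_{j_1}(b) \neq 0$, so $h^*(f_{j_1}) = \tilde g_{j_1} \prod y_i^{N_i}$ is a unit times a monomial and generates $h^*(\mathcal{I}_{\boldsymbol{f}})$.

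The main obstacle — and the only place where the real structure is truly essential — lies in the reverse direction: excluding cancellation in the sum of squares along each $E_i$ to pin down $M_i = 2N_i$, and extracting a unit generator of $h^*(\mathcal{I}_{\boldsymbol{f}})$ from the unit $u = \sum_j \tilde g_j^2$. Both rest on the elementary positivity fact that a sum of squares of real-analytic functions vanishes only where every summand does, which has no $\mathbb{C}$-analogue and is precisely why the proposition is phrased over $\mathbb{R}$.
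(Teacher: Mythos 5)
Your proof is correct and takes essentially the same route as the paper's: both reduce to a local computation at a point of $X_{\mathbb{R}}$, and the heart of the nontrivial direction is the same restriction-to-$\{y_{i_0}=0\}$ argument combined with the fact that a sum of squares of real-analytic functions vanishes identically only if every summand does. The only presentational difference is that you work directly with the per-coordinate valuations $N_i=\min_j \operatorname{ord}_{y_i}(h^*f_j)$ and the monomial $\prod y_i^{N_i}$, whereas the paper extracts $g=\gcd_j f_j^*$ in the local ring (a UFD) and afterwards identifies $g$ with a monomial; your version sidesteps the explicit UFD step but otherwise runs the same argument.
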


\begin{proof}
We suppose that $h$ is a log-principalization of the function $F$, with
numerical data $(2N_{i},v_{i}),i\in T$ ($2N_{i}$ will turn out to be even). We
work in the local ring corresponding to some fixed point of $X_{\mathbb{R}}$;
recall that this local ring is a unique factorization domain. Let $y_{1}%
,\dots,y_{n}$ be coordinates at this point, i.e., a system of parameters of
the local ring. We know that
\[
F^{\ast}(y)=\sum_{i=1}^{l}(f_{i}^{\ast}(y))^{2}=\epsilon(y)\prod_{i=1}%
^{r}y_{i}^{2N_{i}},
\]
where $\epsilon(y)$ is a unit and $r\leq n$. Let $g(y):=\gcd_{1\leq i\leq
l}f_{i}^{\ast}(y)$ and write $f_{i}^{\ast}(y)=g(y)\widetilde{f}_{i}(y)$.

We claim that $\sum_{i=1}^{l}(\widetilde{f}_{i}(y))^{2}$ is a unit $u(y)$.
Assuming the claim, we have that at least one of the $\widetilde{f}_{i}(y)$ is
a unit. Hence
\[
h^{*}\mathcal{I}_{\boldsymbol{f}}=(f_{1}^{*}(y),\dots,f_{l}^{*}(y))=(g(y))
\]
and $\epsilon(y) \prod_{i=1}^{r} y_{i}^{2N_{i}}=u(y)(g(y))^{2}$. So, up to a
unit, $g(y)$ equals $\prod_{i=1}^{r} y_{i}^{N_{i}}$, and indeed $h$ is a
log-principalization of $\mathcal{I}_{\boldsymbol{f}}$.

We now prove the claim. We know that
\[
\epsilon(y)\prod_{i=1}^{r} y_{i}^{2N_{i}} =(g(y))^{2}\left(  \sum_{i=1}%
^{l}(\widetilde{f}_{i}(y))^{2}\right)  .
\]
Since we have unique factorization, either this sum is a unit, or it is
divisible by one of the irreducible elements of the left hand side, i.e., by
$y_{1}$, $y_{2}$, \dots\ or $y_{r}$. Say $\sum_{i=1}^{l}(\widetilde{f}%
_{i}(y))^{2}$ is divisible by $y_{1}$. We can always write this sum as
\[
\sum_{i=1}^{l}(\widetilde{f}_{i}(0,y_{2},\dots,y_{n}))^{2} + y_{1}(\dots).
\]
Then divisibility by $y_{1}$ implies that $\sum_{i=1}^{l}(\widetilde{f}%
_{i}(0,y_{2},\dots,y_{n}))^{2}=0$. Since we are working over $\mathbb{R}$ this
can only happen if $\widetilde{f}_{i}(0,y_{2},\dots,y_{n})=0$ for all
$i=1,\dots,l$. But this is equivalent to $y_{1}$ dividing all $\widetilde
{f}_{i}(y_{1},y_{2},\dots,y_{n})$, contradicting that the $\widetilde{f}%
_{i}(y)$ are relatively prime.

The other implication is quite straightforward.
\end{proof}

\begin{remark}
\textrm{\label{note_log} We recall the description of the (local and global)
log canonical thre\-shold, see e.g. \cite{E-M}-\cite{Kol}-\cite{Sa}, in terms
of a log-principalization. Let $K=\mathbb{R}$ or $\mathbb{C}$ and $U$ an open
submanifold of $K^{n}$. Let $I_{\boldsymbol{f}}=\left(  f_{1},\ldots
,f_{l}\right)  $ be a non-trivial ideal of $K-$analytic functions on $U$. Fix
a log-principalization $h:X_{K}\rightarrow U$ of $I_{\boldsymbol{f}}$ as in
Theorem \ref{theoem1}. }

\textrm{\noindent(1) Let $p\in{\boldsymbol{f}}^{-1}\left(  0\right)  $. The
($K$-)log canonical threshold of $I_{\boldsymbol{f}}$ at $p$ is $c_{p}%
(I_{\boldsymbol{f}})=\min_{i\in T,\text{ }p\in h(E_{i})}\left\{  \frac{v_{i}%
}{N_{i}}\right\}  $. }

\textrm{\noindent(2) The ($K$-)log canonical threshold of $I_{\boldsymbol{f}}$
is $c(I_{\boldsymbol{f}})=\min_{i\in T}\left\{  \frac{v_{i}}{N_{i}}\right\}
$. }
\end{remark}

\begin{proposition}
\label{prop1}Let $\boldsymbol{f}$ and $\Phi$ be as in \ref{fixdata}.

\noindent(1) Let $p\in{\boldsymbol{f}}^{-1}\left(  0\right)  $. If $\Phi$ is
real and nonnegative with support in a small enough neighborhood of $p$ (in
particular $\Phi(p)>0$), then $-c_{p}\left(  \mathcal{I}_{\boldsymbol{f}
}\right)  $ is a pole of $Z_{\Phi}(s,\boldsymbol{f})$, more precisely its
largest pole.

Fix a log-principalization of $\mathcal{I}_{\boldsymbol{f}}$ as in Theorem
\ref{theoem1}.

\noindent(2) Say that $c\left(  \mathcal{I}_{\boldsymbol{f}}\right)  =
\frac{v_{i} }{N_{i}} $ precisely for $i \in T_{\lambda} (\subset T)$. If
$\Phi$ is real and nonnegative and its support intersects $h(\cup_{i\in
T_{\lambda}} E_{i})$, then $-c\left(  \mathcal{I}_{\boldsymbol{f}}\right)  $
is a pole of $Z_{\Phi}(s,\boldsymbol{f})$, more precisely its largest pole.

\noindent(3) Let $r(\mathcal{I}_{\boldsymbol{f}})$ be the maximal number of
$E_{i},i\in T,$ with $c\left(  \mathcal{I}_{\boldsymbol{f}}\right)
=\frac{v_{i}}{N_{i}}$, respectively $c_{p}\left(  \mathcal{I}_{\boldsymbol{f}%
}\right)  =\frac{v_{i}}{N_{i}}$, that have a nonempty intersection. Then
$-c(\mathcal{I}_{\boldsymbol{f}})$, respectively $-c_{p}(\mathcal{I}%
_{\boldsymbol{f}})$, is a pole of order $r(\mathcal{I}_{\boldsymbol{f}})$ of
$Z_{\Phi}(s,\boldsymbol{f})$ when $\Phi$ is as above.
\end{proposition}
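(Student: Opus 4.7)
My plan is to reuse the chart-by-chart decomposition developed in the proof of Theorem \ref{Th1}. First I would pick a nonnegative smooth partition of unity $\{\psi_V\}$ subordinate to the finite cover $\{V_b\}$ of $h^{-1}(\mathrm{supp}\,\Phi)$ and express $Z_\Phi(s,\boldsymbol{f})$ as a finite sum of integrals of type $I(s)$ (which are entire) and of type
\[
J_b(s) = \int_{K^n} \Theta_b(y,s) \prod_{i=1}^{r_b} |y_i|_K^{N_i s + v_i - 1} |dy|,
\]
the key point being that, because $\Phi \geq 0$ and $\psi_V \geq 0$, the amplitude $\Theta_b(\cdot,s)$ is \emph{nonnegative} for real $s$. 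In particular each $J_b$ is real and nonnegative on the real part of its half-plane of convergence.

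The next step is to establish the explicit meromorphic continuation of an individual $J_b$. I would Taylor-expand $\Theta_b$ around the origin in its first $r_b$ variables and reduce to one-variable integrals $\int_K \chi(t) |t|_K^{Ns+v-1} |dt|$, which are known to have a simple pole at $s = -v/N$ with strictly positive residue whenever $\chi \in C_0^\infty(K)$ is nonnegative with $\chi(0) > 0$. From this one reads off that the largest pole of $J_b$ is at $-\min_i v_i/N_i$, that its order equals the number of indices $i$ realizing this minimum, and that the corresponding leading Laurent coefficient is a positive multiple of $\Theta_b(0,-\min_i v_i/N_i)$ whenever the latter is nonzero.

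With these two ingredients in place, the three statements would follow by a judicious choice of $b$. For part (1), I would shrink $\mathrm{supp}\,\Phi$ so that only components $E_i$ with $p \in h(E_i)$ can meet $h^{-1}(\mathrm{supp}\,\Phi)$, which makes $-c_p(\mathcal{I}_{\boldsymbol{f}})$ the largest candidate pole, and then pick $b \in h^{-1}(p)$ lying on an $E_i$ realizing $v_i/N_i = c_p$; the condition $\Phi(p) > 0$ forces $\Theta_b(0,-c_p) > 0$. Part (2) is handled similarly: the hypothesis on $\mathrm{supp}\,\Phi$ directly produces a $b$ with $\Phi(h(b)) > 0$ on some extremal $E_i$. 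For part (3), I would further pick $b$ in the intersection of $r(\mathcal{I}_{\boldsymbol{f}})$ components of extremal ratio, producing a pole of exact order $r(\mathcal{I}_{\boldsymbol{f}})$; the matching upper bound on the order across all charts is automatic from the definition of $r(\mathcal{I}_{\boldsymbol{f}})$.

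The main obstacle I anticipate is non-cancellation: when the $J_b$'s from different charts are summed, their leading Laurent coefficients at $-c(\mathcal{I}_{\boldsymbol{f}})$ could a priori add up to zero. My approach to resolve this is to exploit positivity: since each $J_b(s)$ is nonnegative for real $s$ in its convergence half-plane, each of its leading Laurent coefficients at $-c$ is a nonnegative real number; by the choice of $b$ above at least one is strictly positive, so the sum is strictly positive and the claimed pole is not cancelled.
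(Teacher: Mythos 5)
Your proposal is correct and follows essentially the same approach the paper defers to via its citations to \cite[Chap. II, \S 7, Lemme 4]{AVG} and \cite[pp.\ 32--33]{Igusa4}: decompose $Z_\Phi(s,\boldsymbol{f})$ over the log-principalization charts into monomial integrals $J_b$, observe that the amplitudes $\Theta_b(\cdot,s)$ are nonnegative for real $s$, and use this positivity to conclude that the leading Laurent coefficients at the candidate pole cannot cancel across charts. One small imprecision worth flagging: when only $k<r_b$ of the exponents realize $\min_i v_i/N_i$, the leading Laurent coefficient of $J_b$ at $s_0$ is a positive multiple of $\int \Theta_b(0^{(k)},y',s_0)\prod_{i>k}|y_i|_K^{N_is_0+v_i-1}\,dy'$ rather than of the point value $\Theta_b(0,s_0)$; this integral is still nonnegative, and strictly positive whenever $\Theta_b(0,s_0)>0$ (by continuity), so your non-cancellation argument and the claimed pole orders go through unchanged.
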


\begin{proof}
One uses the case of monomial integrals like in the case of one analytic
function, see e.g. \cite[Chap. II, \S \ 7]{AVG}. The proof is a simple
variation of the one given in \cite[Chap. II, \S \ 7, \ Lemme 4]{AVG} and
\cite[pp. 32-33]{Igusa4} for the case $l=1$. For $K= \mathbb{R}$ one can
alternatively \textit{use} the case $l=1$ and Proposition \ref{princ-resol}.
\end{proof}

Note that this proposition gives an argument to see that these minima do not
depend on the chosen log-principalization.

\begin{corollary}
\label{volume} Let $K$ be $\mathbb{R}$ or $\mathbb{C}$ as before, and
$\boldsymbol{f}$ as in \ref{fixdata}. Let $D$ be a compact subset of $K^{n}$. Then

\noindent(1) $|\boldsymbol{f}(x)|_{K}^{\delta}$ is locally integrable for
$\delta>-c\left(  \mathcal{I}_{\boldsymbol{f}}\right)  $,

\noindent(2) Vol $\left(  \{x\in D\mid|\boldsymbol{f}(x)|_{K}\leq
\alpha\}\right)  \leq\alpha^{c\left(  \mathcal{I}_{\boldsymbol{f}}\right)
-\epsilon}\int_{D}|\boldsymbol{f}(x)|_{K}^{-c\left(  \mathcal{I}%
_{\boldsymbol{f}}\right)  +\epsilon}|dx|$,

\noindent for $\alpha>0$ and any small $\epsilon>0$.
\end{corollary}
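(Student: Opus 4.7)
My plan is to deduce (1) from the same local reduction to monomial integrals that underpins the proof of Theorem \ref{Th1}, and then to obtain (2) from (1) by a one-line Chebyshev-type inequality.

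For (1), I would first dominate $\int_{D}|\boldsymbol{f}(x)|_{K}^{\delta}\,|dx|$ by $\int_{K^{n}}\Phi(x)\,|\boldsymbol{f}(x)|_{K}^{\delta}\,|dx|$ for some nonnegative $\Phi\in C_{0}^{\infty}(K^{n})$ with $\Phi\geq\mathbf{1}_{D}$, so that it suffices to check that the latter converges as an absolute Lebesgue integral for every real $\delta>-c(\mathcal{I}_{\boldsymbol{f}})$. Fixing a log-principalization $h:X_{K}\to U$ of $\mathcal{I}_{\boldsymbol{f}}$ and running exactly the local-chart-plus-partition-of-unity reduction from the proof of Theorem \ref{Th1}, I am reduced to finitely many model integrals
\[
\int_{P_{\epsilon}(0)}|\Theta(y)|\,\prod_{i=1}^{r}|y_{i}|_{K}^{N_{i}\delta+v_{i}-1}\,|dy|,
\]
with $\Theta$ bounded. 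Near $y_{i}=0$ each of these is integrable precisely when $\delta>-v_{i}/N_{i}$: in the complex case the exponent doubling coming from $|y_{i}|_{\mathbb{C}}=|y_{i}|^{2}$ is exactly compensated by the radial factor $r\,dr\,d\theta$, giving the same threshold as over $\mathbb{R}$. Since the chosen $\delta$ satisfies $\delta>-c(\mathcal{I}_{\boldsymbol{f}})=-\min_{i\in T}v_{i}/N_{i}$, every inequality holds simultaneously and the sum of the pieces is finite.

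For (2), write $c=c(\mathcal{I}_{\boldsymbol{f}})$ and fix $\epsilon>0$ small enough that $c-\epsilon>0$. Because $\boldsymbol{f}$ is analytic and nontrivial, $\boldsymbol{f}^{-1}(0)$ has $|dx|$-measure zero, and on the set $\{0<|\boldsymbol{f}(x)|_{K}\leq\alpha\}$ one has the trivial pointwise bound $1\leq\alpha^{c-\epsilon}\,|\boldsymbol{f}(x)|_{K}^{-c+\epsilon}$. Integrating this over $\{x\in D:0<|\boldsymbol{f}(x)|_{K}\leq\alpha\}$ and then enlarging the domain of integration to $D$ yields exactly the claimed inequality; the right-hand integral is finite by part (1) applied with $\delta=-c+\epsilon$.

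The only mildly delicate point is the bookkeeping in part (1) — namely confirming that $\delta>-v_{i}/N_{i}$ is indeed the correct integrability threshold in both the real and complex case once the Jacobian formula (\ref{cond3}) is inserted into $|dx|$ under the convention $|\cdot|_{\mathbb{C}}=|\cdot|^{2}$. Everything else is formal: part (1) piggybacks on the local analysis already developed for Theorem \ref{Th1}, and part (2) is a textbook Chebyshev estimate.
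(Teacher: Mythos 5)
Your proof is correct and takes essentially the same route as the paper: the paper deduces (1) from Proposition \ref{prop1}, which itself rests on the same reduction to monomial integrals via a log-principalization that you carry out explicitly (including the correct verification that the integrability threshold $\delta>-v_{i}/N_{i}$ is identical over $\mathbb{R}$ and over $\mathbb{C}$, once the convention $|\cdot|_{\mathbb{C}}=|\cdot|^{2}$ and the Jacobian in (\ref{cond3}) are accounted for), and (2) is obtained, as in the paper, by the Chebyshev-type pointwise bound $1\leq\alpha^{c-\epsilon}|\boldsymbol{f}(x)|_{K}^{-c+\epsilon}$ on $\{0<|\boldsymbol{f}(x)|_{K}\leq\alpha\}$. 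Your write-up in fact fills in a small step the paper leaves implicit, namely that the convergence of the original integral (not merely the absence of poles of its meromorphic continuation) for $\operatorname{Re}(s)>-c(\mathcal{I}_{\boldsymbol{f}})$ comes from the local monomial estimates in the proof of Theorem \ref{Th1}.
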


\begin{proof}
The first part follows directly from Proposition \ref{prop1}. Then the second
part follows via the Chebyshev inequality.
\end{proof}

Such bounds on volumes have recently emerged as central to aspects of complex
differential geometry, see \cite{P-St} and references therein.

\section{Newton Polyhedra and Log-Principalizations}

We collect some results about Newton polyhedra and log-principalizations
following \cite{V-Z} and the references therein. In this section we take again
$K=\mathbb{R}$ or $K=\mathbb{C}$ .

We set $\mathbb{R}_{+}:=\{x\in\mathbb{R}\mid x\geqslant0\}$. Let $G$ be a
nonempty subset of $\mathbb{N}^{n}$. The \textit{Newton polyhedron }
$\Gamma=\Gamma\left(  G\right)  $ associated to $G$ is the convex hull in
$\mathbb{R}_{+}^{n}$ of the set $\cup_{m\in G}\left(  m+\mathbb{R}_{+}%
^{n}\right)  $. For instance classically one associates a \textit{Newton
polyhedron (at the origin) to } $g(x)=\sum_{m}c_{m}x^{m}$ ($x=\left(
x_{1},\ldots,x_{n}\right)  $, $g(0)=0$), being a nonconstant polynomial
function over $K$ or a $K$--analytic function in a neighborhood of the origin,
where $G=$ supp$(g)$ $:=$ $\left\{  m\in\mathbb{N}^{n}\mid c_{m}\neq0\right\}
$.\ Further we associate more generally a Newton polyhedron to an analytic mapping.

We fix a Newton polyhedron $\Gamma$\ as above. Let $\left\langle \cdot
,\cdot\right\rangle $ denote the usual inner product of $\mathbb{R}^{n}$, and
identify the dual space of $\mathbb{R}^{n}$ with $\mathbb{R}^{n}$ itself by
means of it.

For $a\in\mathbb{R}_{+}^{n}$, we define
\[
d(a,\Gamma)=d(a)=\min_{x\in\Gamma}\left\langle a,x\right\rangle ,
\]
and \textit{the first meet locus }$F(a)$ of $a$ as
\[
F(a):=\{x\in\Gamma\mid\left\langle a,x\right\rangle =d(a)\}.
\]
The first meet locus is a face of $\Gamma$. Moreover, if $a\neq0$, $F(a)$ is a
proper face of $\Gamma$.

We define an equivalence relation in $\mathbb{R}_{+}^{n}$ by taking $a\sim
a^{\prime}\Leftrightarrow F(a)=F(a^{\prime})$. The equivalence classes of
$\sim$ are sets of the form
\[
\Delta_{\tau}=\{a\in\mathbb{R}_{+}^{n}\mid F(a)=\tau\},
\]
where $\tau$ is a face of $\Gamma$.

We recall that the cone strictly\ spanned by the vectors $a_{1},\ldots
,a_{r}\in\mathbb{R}_{+}^{n}\setminus\left\{  0\right\}  $ is the set
$\Delta=\left\{  \lambda_{1}a_{1}+...+\lambda_{r}a_{r}\mid\lambda_{i}
\in\mathbb{R}_{+}\text{, }\lambda_{i}>0\right\}  $. If $a_{1},\ldots,a_{r}$
are linearly independent over $\mathbb{R}$, $\Delta$ is called a
\textit{simplicial cone}. If $\left\{  a_{1},\ldots,a_{r}\right\}  $ is a
subset of a basis of the $\mathbb{Z}$-module $\mathbb{Z}^{n}$, we call
$\Delta$ a \textit{simple cone}.

A precise description of the geometry of the equivalence classes modulo $\sim$
is as follows. Each \textit{facet} (i.e. a face of codimension one) $\gamma$
of $\Gamma$\ has a unique vector $a(\gamma)=(a_{\gamma,1},\ldots,a_{\gamma
,n})\in\mathbb{N}^{n}\mathbb{\setminus}\left\{  0\right\}  $, whose nonzero
coordinates are relatively prime and which is perpendicular to $\gamma$. We
denote by $\mathfrak{D}(\Gamma)$ the set of such vectors. The equivalence
classes are rational cones of the form
\[
\Delta_{\tau}=\{\sum\limits_{i=1}^{r}\lambda_{i}a(\gamma_{i})\mid\lambda
_{i}\in\mathbb{R}_{+}\text{, }\lambda_{i}>0\},
\]
where $\tau$ runs through the set of faces of $\Gamma$, and $\gamma_{i}$,
$i=1,\ldots,r$\ are the facets containing $\tau$. We note that $\Delta_{\tau
}=\{0\}$ if and only if $\tau=\Gamma$. The family $\left\{  \Delta_{\tau
}\right\}  _{\tau}$, with $\tau$ running over\ the proper faces of $\Gamma$,
is a partition of $\mathbb{R}_{+}^{n}\backslash\{0\}$; we call this partition
a \textit{polyhedral subdivision of} $\mathbb{R}_{+}^{n}$ \textit{subordinated
to} $\Gamma$. We call $\left\{  \overline{\Delta}_{\tau}\right\}  _{\tau}$,
the family formed by the topological closures of the $\Delta_{\tau}$, a
\textit{\ fan} \textit{subordinated to} $\Gamma$.

Each cone $\Delta_{\tau}$\ can be partitioned into a finite number of
simplicial cones $\Delta_{\tau,i}$. In addition, the subdivision can be chosen
such that each $\Delta_{\tau,i}$ is spanned by part of $\mathfrak{D}(\Gamma)$.
Thus from the above considerations we have the following partition of
$\mathbb{R}_{+}^{n}\backslash\{0\}$:
\begin{equation}
\mathbb{R}_{+}^{n}\backslash\{0\}=\bigcup\limits_{\tau\text{ }}\left(
\bigcup\limits_{i=1}^{l_{\tau}}\Delta_{\tau,i}\right)  ,\label{simplisubv}%
\end{equation}
where $\tau$ runs over the proper faces of $\Gamma$, and each $\Delta_{\tau
,i}$ is a simplicial cone contained in $\Delta_{\tau}$.

By adding new rays, each simplicial cone can be partitioned further into a
finite number of simple cones. In this way we obtain a \textit{simple
polyhedral subdivision} of $\mathbb{R}_{+}^{n}$ \textit{subordinated to}
$\Gamma$ and a \textit{simple fan} \textit{subordinated to} $\Gamma$ (see e.g.
\cite{K-M-S}).

\subsection{The Newton polyhedron associated to an analytic mapping}

Let $\boldsymbol{f}=(f_{1},\ldots,f_{l})$, $\boldsymbol{f}\left(  0\right)
=0$, be a nonconstant analytic mapping defined on a neighborhood $U\subset
K^{n}$ of the origin. In \cite{V-Z} the authors associated to $\boldsymbol{f}
$ a Newton polyhedron $\Gamma\left(  \boldsymbol{f}\right)  :=\Gamma\left(
\cup_{i=1}^{l}\text{supp}\left(  f_{i}\right)  \right)  $, and a
non-degeneracy condition to $\boldsymbol{f}$ and $\Gamma\left(  \boldsymbol{f}%
\right)  $.

If $f_{i}\left(  x\right)  =
{\textstyle\sum\nolimits_{m}}
c_{m,i}x^{m}$, and $\tau$ is a face of $\Gamma\left(  \boldsymbol{f}\right)
$, we set%
\[
f_{i,\tau}\left(  x\right)  :=
{\textstyle\sum\limits_{m\in\text{supp}(f_{i})\cap\tau}}
c_{m,i}x^{m}.
\]

\begin{definition}
\textrm{\label{def1} Let $\boldsymbol{f}=(f_{1},\ldots,f_{l}):U\longrightarrow
K^{l}$ be a nonconstant analytic mapping satisfying $\boldsymbol{f}\left(
0\right)  =0$. The mapping \ $\boldsymbol{f}$ is called \textit{strongly
non-degenerate at the origin with respect to }$\Gamma(\boldsymbol{f}
)$,\textit{\ }if for any \textit{compact} face $\tau\subset\Gamma
(\boldsymbol{f})$ and any $z\in\left\{  z\in\left(  K^{\times}\right)
^{n}\mid f_{1,\tau}(z)=\ldots=f_{l,\tau}(z)=0\right\}  $ it satisfies
$rank\left[  \frac{\partial f_{i,\tau}}{\partial x_{j}}\left(  z\right)
\right]  =\min\{l,n\}$.
}
\end{definition}

\begin{remark}
\textrm{(1) The above notion of non-degeneracy agrees with the one given by
Varchenko for the case $l=1$, see \cite{Var}. On the other hand, the previous
notion does not agree with the non-degeneracy notion with respect to a
collection of Newton polyhedra given by Khovanskii in \cite{K}. We refer the
reader to \cite{V-Z}\ for a further discussion about the mentioned
non-degeneracy conditions. }

\textrm{\noindent(2) If we fix $l$ and $\Gamma$, then one can show, just as
for the classical case $l=1$, that `most' mappings $\boldsymbol{f}%
=(f_{1},\ldots,f_{l})$, with $\boldsymbol{f}(0)=0 $ and Newton polyhedron
$\Gamma(\boldsymbol{f})=\Gamma$, are strongly non-degenerate at the origin.
One can easily generalize e.g. the proof of Lemme 1 in \cite[p. 157]{AVG}.
However, just as for $l=1$, we should say that there are many interesting
non-generic mappings. }

\textrm{\noindent(3) In \cite{K}, Khovanskii established the existence of an
embedded resolution for a variety using a collection of Newton polyhedra, see
also \cite{AVG} for the case $l=1$. In \cite{V-Z}, a log-principalization for
an ideal with generators satisfying the above-mentioned notion of
non-degeneracy and using one Newton polyhedron was established, see
Proposition \ref{proposition1} below. This result agrees with Khovanskii's
result only in the case $l=1$. }
\end{remark}

\begin{remark}
\textrm{\label{note} When $K=\mathbb{R}$ it is again natural to try to study
the mapping $\boldsymbol{f}=(f_{1},\ldots,f_{l})$ via the function
$F:=\sum_{i=1}^{l}f_{i}^{2}$. It is not difficult to verify that $\Gamma(F)$
is the `double'\ of $\Gamma(\boldsymbol{f})$, i.e. obtained from it after
scaling by a factor $2$. Consider however the statements }

\textrm{(i) $F$ is non-degenerate at the origin with respect to $\Gamma(F)$,
and }

\textrm{(ii) $\boldsymbol{f}$ is strongly non-degenerate at the origin with
respect to $\Gamma(\boldsymbol{f})$. }

\textrm{\noindent It is easy to verify that (i) implies (ii), but in general
the converse is not true. Consider for instance any strongly non-degenerate
$\boldsymbol{f}$ for which $\{f_{1,\tau}=\ldots=f_{l,\tau}=0\}\cap\left(
{\mathbb{R}}^{\times}\right)  ^{n}\neq\emptyset$ for some compact face $\tau$
of $\Gamma(\boldsymbol{f})$, e.g. $\boldsymbol{f}=(x^{2}-y^{3},x^{2}-z^{3})$.
}

\textrm{In such cases the `classical'\ embedded resolution of a non-degenerate
function is not helpful, but one can use the log-principalization of a
strongly non-degenerate mapping of Proposition \ref{proposition1} below. }
\end{remark}

\subsection{Log-principalizations}

\begin{proposition}
[{\cite[Prop. 3.9]{V-Z}}]\label{proposition1}Let $\boldsymbol{f}=(f_{1}%
,\ldots,f_{l}):U(\subset K^{n})\longrightarrow K^{l}$ be a nonconstant
analytic mapping, strongly \textit{non-degenerate at the origin with respect
to }$\Gamma\left(  \boldsymbol{f}\right)  $. Let $\ \mathcal{F}
_{\boldsymbol{f}}$ be a simple fan subordinated to $\Gamma\left(
\boldsymbol{f}\right)  $. Let $Y_{K}$ be the toric manifold corresponding to
$\mathcal{F}_{\boldsymbol{f}}$, and let
\[
\sigma_{0}:Y_{K}\longrightarrow U
\]
be the restriction of the corresponding toric map to the inverse image of $U$.
Denote by $Z$ the set of common zeroes of $\mathcal{I}_{\boldsymbol{f}}
=(f_{1},\ldots,f_{l})$ in $U\cap\left(  K^{\times}\right)  ^{n}$. When $U$ is
taken small enough, either $Z=\varnothing$ or it is a submanifold of
codimension $l$. In this last case we have $l<n$ and we denote the closure of
$Z$ in $Y_{K}$ by $Z_{Y}$.

\noindent(1) If $Z=\varnothing$ (or if $l=1$), the ideal $\sigma_{0}^{\ast
}\left(  \mathcal{I}_{\boldsymbol{f}}\right)  $ is principal (and monomial) in
a sufficiently small neighborhood of $\sigma_{0}^{-1}\left\{  0\right\}  $.

\noindent(2) If $Z\neq\varnothing$, we have that $Z_{Y}$ is a closed
submanifold of $Y_{K}$, having normal crossings with the exceptional divisor
of $\sigma_{0}$. Let $\sigma_{1}:X_{K}\longrightarrow Y_{K}$ be the blowing-up
of $Y_{K}$\ with center $Z_{Y}$, and let $\sigma=\sigma_{0}\circ\sigma
_{1}:X_{K}\longrightarrow U$.\ Then the ideal $\sigma^{\ast}\left(
\mathcal{I}_{\boldsymbol{f}}\right)  $ is principal\ (and monomial) in a
sufficiently small neighborhood of $\sigma^{-1}\left\{  0\right\}  $.
\end{proposition}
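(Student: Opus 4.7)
The plan is to argue chart by chart on $Y_K$, combining the explicit monomial form of $\sigma_0$ with Definition \ref{def1}. Fix a top-dimensional simple cone $\Delta \in \mathcal{F}_{\boldsymbol{f}}$ with $\mathbb{Z}$-basis generators $a_1,\ldots,a_n$; on the corresponding chart $U_\Delta \cong K^n$ with coordinates $(y_1,\ldots,y_n)$, $\sigma_0$ is the monomial map $x_j = \prod_k y_k^{(a_k)_j}$, and a direct computation yields
$$\sigma_0^* f_i(y) = \Bigl(\prod_{k=1}^n y_k^{d(a_k)}\Bigr)\hat f_i(y),$$
where $\hat f_i$ is analytic and, at any point $p \in U_\Delta$ with $y_k(p)=0$ exactly for $k\in I$, one has $\hat f_i(p) = c(p)\,f_{i,\tau_I}(z(p))$, with $\tau_I := \bigcap_{k\in I} F(a_k)$ a face of $\Gamma(\boldsymbol f)$, $z(p)\in (K^\times)^n$, and $c(p)\neq 0$. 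A key preliminary remark is that $\sigma_0(p)=0$ forces the cone spanned by $\{a_k\}_{k\in I}$ to contain a strictly positive vector, so $\tau_I$ is automatically a compact face of $\Gamma(\boldsymbol f)$; hence Definition \ref{def1} applies at every $p\in\sigma_0^{-1}\{0\}$.

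For case (1), I would show that at each such $p$ either some $\hat f_i(p)\neq 0$ — in which case $(\hat f_1,\ldots,\hat f_l)$ is the unit ideal at $p$ and $\sigma_0^*\mathcal I_{\boldsymbol f}$ is already monomial — or the residual ideal defines a smooth hypersurface transverse to the exceptional divisor $\bigcup_{k\in I}\{y_k=0\}$, after which a local change of coordinates makes $\sigma_0^*\mathcal I_{\boldsymbol f}$ monomial. If $Z=\emptyset$, an implicit function theorem argument along a ray $a$ in the interior of $\Delta$ (with $x(t)=z(p)\cdot t^a \to 0$) shows that a common torus zero of the initial forms $(f_{1,\tau_I},\ldots,f_{l,\tau_I})$ would produce a point of $Z$ near the origin, contradicting $Z=\emptyset$; so the first alternative always holds. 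If $l=1$, non-degeneracy at $\tau_I$ gives $\nabla f_{1,\tau_I}\neq 0$ on its torus zero locus, which translates to $\partial\hat f_1/\partial y_k\neq 0$ at $p$ for some $k\notin I$; swapping $y_k$ with $\hat f_1$ as a local coordinate turns $\sigma_0^*f_1$ into a monomial times a unit.

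For case (2), with $Z\neq\emptyset$ and $l<n$, I would first deduce smoothness of $Z$ of codimension $l$ on $U\cap(K^\times)^n$ by applying non-degeneracy at the compact faces whose normal cones cover the directions along which $Z$ accumulates at the origin. Then in each chart, at a common zero $p\in\sigma_0^{-1}\{0\}$ of $(\hat f_1,\ldots,\hat f_l)$, compactness of $\tau_I$ and non-degeneracy force the Jacobian $(\partial\hat f_i/\partial y_k)_{i,\,k\notin I}$ to have rank $l$ at $p$; this shows simultaneously that $Z_Y:=V(\hat f_1,\ldots,\hat f_l)$ is smooth of codimension $l$ in $Y_K$ and transverse to the exceptional divisor, i.e., has normal crossings with it. Choosing local coordinates in which $Z_Y=\{z_1=\cdots=z_l=0\}$ is disjoint from the exceptional indices, the blow-up $\sigma_1$ along $Z_Y$ converts $(z_1,\ldots,z_l)$ into a principal monomial ideal on each of its charts; composing with the monomial pullback by $\sigma_0$ gives that $\sigma^*\mathcal I_{\boldsymbol f}$ is principal and monomial near $\sigma^{-1}\{0\}$. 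The main technical obstacle is the deformation argument that implements $Z=\emptyset \Rightarrow$ (no common torus zero of the initial forms) in case (1), together with the analogous rank-$l$ conclusion in case (2); these are the mapping analogues of Varchenko's and Khovanskii's calculations for a single function and for a collection of polyhedra respectively, adapted to the stronger non-degeneracy of Definition \ref{def1}.
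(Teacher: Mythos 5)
The paper does not give its own proof of Proposition~\ref{proposition1}: it is quoted from \cite[Prop.~3.9]{V-Z}, so the comparison must be with the proof there. Your chart-by-chart reconstruction is essentially that proof --- factor $\sigma_0^*f_i=\bigl(\prod_k y_k^{d(a_k)}\bigr)\hat f_i$, observe that $\sigma_0(p)=0$ forces the face $\tau_I$ to be compact, invoke Definition~\ref{def1} at $\tau_I$, and then either conclude $(\hat f_1,\dots,\hat f_l)$ is the unit ideal at $p$, or (for $l<n$) that it is the ideal of a smooth codimension-$l$ submanifold transverse to the exceptional locus, after which one blow-up with smooth center principalizes it. That is the correct architecture, and your outline captures the key ideas.

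Two steps in the sketch need more care. First, the passage from $\operatorname{rank}\bigl[\partial f_{i,\tau_I}/\partial x_j(z(p))\bigr]=\min\{l,n\}$ to $\operatorname{rank}\bigl[\partial\hat f_i/\partial y_k(p)\bigr]_{k\notin I}=l$ is not an immediate chain-rule statement, since only $n-\lvert I\rvert$ of the $y$-variables vary at $p$. One must use the quasi-homogeneity of $f_{i,\tau_I}$ with respect to the normals $a_k$, $k\in I$: the Euler relation $\sum_j (a_k)_j x_j\,\partial_j f_{i,\tau_I}=d(a_k)\,f_{i,\tau_I}$ shows that at a common torus zero the directions coming from $I$ lie in the kernel of every $Df_{i,\tau_I}$, so the rank is carried entirely by the remaining $k\notin I$; this is the mapping version of \cite[Ch.~II, \S8, Lemme~8]{AVG} and should be stated. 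Second, your implicit-function-theorem step in case (1) proves ``$Z=\emptyset\Rightarrow$ no common torus zero of the initial forms'' only when $l\le n$; for $l>n$ the non-degeneracy rank is only $\min\{l,n\}=n<l$, so the IFT does not produce a curve of zeros of $\boldsymbol f$ and your contradiction evaporates. As it happens this case is vacuous: the same Euler relations show that at a common torus zero of $f_{1,\tau},\dots,f_{l,\tau}$ the Jacobian rank is at most $\dim\tau\le n-1$, so the requirement $\operatorname{rank}=n$ can never be met, and hence for $l\ge n$ a strongly non-degenerate $\boldsymbol f$ has no torus zeros of initial forms at all. You should make this observation explicit; without it the argument genuinely does not close for $l>n$.
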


\section{Poles for Local Zeta Functions and Newton Polyhedra}

Given $\xi=\left(  \xi_{1},\ldots,\xi_{n}\right)  \in\mathbb{N}^{n}
\setminus\left\{  0\right\}  $, we put $\sigma\left(  \xi\right)  :=\xi
_{1}+\ldots+\xi_{n}$ and $d\left(  \xi\right)  =\min_{x\in\Gamma\left(
\boldsymbol{f}\right)  }\left\langle \xi,x\right\rangle $ as before. We say
that $\xi$ is a primitive vector, if $\gcd\left(  \xi_{1},\ldots,\xi
_{n}\right)  =1$. If $d\left(  \xi\right)  \neq0$, we define
\[
\mathcal{P}\left(  \xi\right)  =\left\{  -\frac{\sigma\left(  \xi\right)
+k}{d\left(  \xi\right)  } \mid k\in\mathbb{N} \right\}  .
\]
We also define
\[
\gamma_{0}\left(  \boldsymbol{f}\right)  =\min_{\xi\in\mathfrak{D}%
(\Gamma\left(  \boldsymbol{f}\right)  )}\left\{  \frac{\sigma\left(
\xi\right)  }{d\left(  \xi\right)  }\right\}  .
\]
Varchenko called $\gamma_{0}\left(  \boldsymbol{f}\right)  $ the distance from
the origin to $\Gamma\left(  \boldsymbol{f}\right)  $. The number $\gamma
_{0}\left(  \boldsymbol{f}\right)  $ admits the following geometric
interpretation. Let $\left(  t_{0},\ldots,t_{0}\right)  $ be the intersection
point of the diagonal $\{\left(  t,\ldots,t\right)  \in\mathbb{R}^{n}\mid
t\in\mathbb{R}\}$ with the boundary of $\Gamma\left(  \boldsymbol{f}\right)
$, then $\gamma_{0}\left(  \boldsymbol{f}\right)  =1/t_{0}$.

Let $\ \mathcal{F}_{\boldsymbol{f}}$ be a simple fan subordinated to
$\Gamma\left(  \boldsymbol{f}\right)  $. Then the set of generators of the
cones in $\mathcal{F}_{\boldsymbol{f}}$, i.e. the skeleton of $\mathcal{F}
_{\boldsymbol{f}}$, can be partitioned as $\Lambda_{\boldsymbol{f}}
\cup\mathfrak{D}(\Gamma\left(  \boldsymbol{f}\right)  )$, where $\Lambda
_{\boldsymbol{f}}$ is a finite set of primitive vectors, corresponding to the
extra rays, induced by the subdivision into simple cones.

The numerical data of the log-principalizations constructed in Proposition
\ref{proposition1} can be computed directly from the explicit expressions for
the generators of $\sigma_{0}^{\ast}\left(  \mathcal{I}_{\boldsymbol{f}%
}\right)  $, $\sigma^{\ast}\left( \mathcal{I}_{\boldsymbol{f}}\right)  $, and
Lemme 8 in \cite[p. 201]{AVG}. The following theorem follows from the previous
considerations by adapting the proof given by Varchenko for the case $l=1$ to
arbitrary $l\geq1$.

\begin{theorem}
\label{Theorem2a} Let $\boldsymbol{f}=(f_{1},\ldots,f_{l}):U(\subset K^{n}
)\longrightarrow K^{l}$, with $\boldsymbol{f}\left(  0\right)  =0$, be an
analytic mapping, strongly non-degenerate at the origin with respect to
$\Gamma\left(  \boldsymbol{f}\right) $. There exists a neighborhood $V
(\subset U)$ of the origin such that, if $\Phi$ is a smooth function with
support contained in $V$, then the following assertions hold.

\noindent(1) The function $Z_{\Phi}\left(  s,\boldsymbol{f}\right)  $ is
holomorphic on the complex half-plane $\operatorname{Re}(s)>$\linebreak%
$\max\left\{  -\gamma_{0}\left(  \boldsymbol{f}\right) ,-l\right\} $.

\noindent(2) The poles of $Z_{\Phi}\left(  s,\boldsymbol{f} \right)  $ belong
to the set $\cup_{\xi\in\mathfrak{D} (\Gamma\left(  \boldsymbol{f}\right)
)}\mathcal{P}\left(  \xi\right)  \cup\cup_{\xi\in\Lambda_{\boldsymbol{f}}%
}\mathcal{P}\left(  \xi\right)  \cup\left(  -\left(  l+\mathbb{N}\right)
\right)  $, where the last set may be discarded if $l\geq n$.

\noindent(3) If $\gamma_{0}\left(  \boldsymbol{f}\right)  <l$, then
$s=-\gamma_{0}\left(  \boldsymbol{f}\right)  $ is a pole of $Z_{\Phi}\left(
s,\boldsymbol{f}\right) $ as a distribution on the space of smooth functions
with compact support.
\end{theorem}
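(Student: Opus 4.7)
My plan is to deduce all three assertions directly from Theorem \ref{Th1} and Proposition \ref{prop1}, applied to the explicit log-principalization $\sigma:X_K\to U$ of $\mathcal{I}_{\boldsymbol{f}}$ furnished by Proposition \ref{proposition1}. The first task is thus to compute the numerical data $(N_i,v_i)$ of $\sigma$. For the toric map $\sigma_0:Y_K\to U$ each primitive generator $\xi\in\mathfrak{D}(\Gamma(\boldsymbol{f}))\cup\Lambda_{\boldsymbol{f}}$ of the fan $\mathcal{F}_{\boldsymbol{f}}$ gives rise to an exceptional (or strict-transform-of-coordinate) divisor; combining the explicit monomialization of $\sigma_0^{\ast}\mathcal{I}_{\boldsymbol{f}}$ given in Proposition \ref{proposition1} with Lemme~8 of \cite[p.~201]{AVG} shows that the numerical data of such a divisor are $(N,v)=(d(\xi),\sigma(\xi))$. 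When $Z\neq\varnothing$, the additional blowup $\sigma_1$ is centred on a smooth submanifold of codimension $l$ meeting the previous exceptional divisors transversally; the new exceptional divisor $E_0$ therefore contributes numerical data $(N_0,v_0)=(1,l)$, since the generators of $\mathcal{I}_{\boldsymbol{f}}$ vanish simply along the centre and the Jacobian picks up the standard blowup discrepancy $l-1$.

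Part~(2) is then immediate from Theorem \ref{Th1}: the set of candidate poles is $\bigcup_i\bigl(-\tfrac{v_i}{N_i}-\tfrac{\mathbb{N}}{N_i}\bigr)$, which matches exactly $\bigcup_{\xi\in\mathfrak{D}(\Gamma(\boldsymbol{f}))}\mathcal{P}(\xi)\cup\bigcup_{\xi\in\Lambda_{\boldsymbol{f}}}\mathcal{P}(\xi)\cup\bigl(-(l+\mathbb{N})\bigr)$, the last piece being absent precisely when $Z=\varnothing$, in particular when $l\geq n$. For part~(1) I would first note that for every $\xi\in\mathbb{R}_+^n\setminus\{0\}$ one has $\sigma(\xi)/d(\xi)\geq\gamma_0(\boldsymbol{f})$: indeed $(t_0,\ldots,t_0)\in\Gamma(\boldsymbol{f})$ yields $t_0\sigma(\xi)\geq d(\xi)$, hence $\sigma(\xi)/d(\xi)\geq 1/t_0=\gamma_0(\boldsymbol{f})$. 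Applying this to every $\xi\in\mathfrak{D}(\Gamma(\boldsymbol{f}))\cup\Lambda_{\boldsymbol{f}}$, together with $v_0/N_0=l$ for the divisor $E_0$, shows that no candidate pole exceeds $\max\{-\gamma_0(\boldsymbol{f}),-l\}$, giving the claimed holomorphy.

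For part~(3), the same computation of numerical data together with Remark \ref{note_log} gives $c(\mathcal{I}_{\boldsymbol{f}})=\min_i v_i/N_i=\min\{\gamma_0(\boldsymbol{f}),l\}$, which under the hypothesis $\gamma_0(\boldsymbol{f})<l$ equals $\gamma_0(\boldsymbol{f})$, and this value is attained on some $\xi_0\in\mathfrak{D}(\Gamma(\boldsymbol{f}))$. The corresponding exceptional divisor of $\sigma$ meets $\sigma^{-1}(0)$, so choosing $\Phi$ real, nonnegative, with $\Phi(0)>0$ and support close enough to the origin satisfies the hypothesis of Proposition \ref{prop1}(2), and we conclude that $-\gamma_0(\boldsymbol{f})$ is actually a pole of $Z_\Phi(s,\boldsymbol{f})$.

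The delicate step is the numerical-data computation: one has to verify that the transversality in Proposition \ref{proposition1}(2) really forces the blowup along $Z_Y$ to produce only the single new datum $(1,l)$ (so that all extra candidate poles coming from it lie in $-(l+\mathbb{N})$), and that the local monomial form on $Y_K$ is correctly read off from the simple-fan construction so that Lemme~8 of \cite{AVG} applies and yields $(d(\xi),\sigma(\xi))$ at each ray $\xi$. Once these local calculations are in place, the three assertions of the theorem are essentially formal consequences of Theorem \ref{Th1} and Proposition \ref{prop1}.
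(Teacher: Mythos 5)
Your proposal is correct and follows the same route the paper indicates: the paper's entire ``proof'' of Theorem~\ref{Theorem2a} is the two-sentence remark that the numerical data of the log-principalization from Proposition~\ref{proposition1} can be read off from the generators of $\sigma_0^\ast(\mathcal{I}_{\boldsymbol{f}})$, $\sigma^\ast(\mathcal{I}_{\boldsymbol{f}})$ and Lemme~8 of \cite{AVG}, after which one adapts Varchenko's argument. Your numerical-data computation $(N,v)=(d(\xi),\sigma(\xi))$ for each ray $\xi$ of the fan and $(N_0,v_0)=(1,l)$ for the exceptional divisor of the blowup along $Z_Y$, followed by plugging these into Theorem~\ref{Th1} for part~(2), the elementary inequality $\sigma(\xi)/d(\xi)\geq 1/t_0=\gamma_0(\boldsymbol{f})$ for part~(1), and Proposition~\ref{prop1} together with $c(\mathcal{I}_{\boldsymbol{f}})=\min\{\gamma_0(\boldsymbol{f}),l\}$ for part~(3), is a correct and faithful elaboration of exactly this plan.
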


In the case $l=1$ and $K=\mathbb{R}$, Denef and Sargos proved in \cite{D-S}
that $\cup_{\xi\in\Lambda_{\boldsymbol{f}}}\mathcal{P}\left(  \xi\right)  $
may be discarded from the list of candidate poles in Theorem \ref{Theorem2a}.
This is a strong and interesting result, yielding in general a much shorter
list of candidate poles, that can moreover be read off immediately from
$\Gamma\left(  \boldsymbol{f}\right) $.

In the next sections, we extend the Denef-Sargos result to arbitrary $l \geq
1$. Actually, we follow reasonably closely the approach of \cite{D-S}.
However, a number of extra difficulties pop up, for which a careful analysis
is needed. In order to produce a readable text, we have to recall in the
sequel the main ideas of \cite{D-S}. Our goal will be to show the following
result, for which we already indicate the starting point of its proof.

\begin{theorem}
\label{Theorem3} Let $\boldsymbol{f}=(f_{1},\ldots,f_{l}):U (\subset
\mathbb{R}^{n}) \longrightarrow\mathbb{R}^{l}$, with $\boldsymbol{f}\left(
0\right)  =0$, be an analytic mapping, strongly non-degenerate at the origin
with respect to $\Gamma\left(  \boldsymbol{f}\right) $. There exists a
neighborhood $V (\subset U)$ of the origin such that, if $\Phi$ is a smooth
function with support contained in $V$, then the following assertions hold.

\noindent(1) The poles of $Z_{\Phi}\left(  s,\boldsymbol{f}\right)  $ belong
to the set $\cup_{\xi\in\mathfrak{D}(\Gamma\left(  \boldsymbol{f}\right)
)}\mathcal{P}\left(  \xi\right)  \cup\left(  -\left(  l+\mathbb{N}\right)
\right)  $.

\noindent(2) Let $\rho$ be an integer satisfying $1\leq\rho\leq n$, and let
$s_{0}$ be a candidate pole of $Z_{\Phi}\left(  s,\boldsymbol{f}\right)  $
with $s_{0}\notin-\left(  l+\mathbb{N}\right)  $ (respectively $s_{0}%
\in-\left(  l+\mathbb{N}\right)  $). A necessary condition for $s_{0}$ to be a
pole of $Z_{\Phi}\left(  s,\boldsymbol{f}\right) $ of order $\rho$, is that
there exists a face $\tau\subset\Gamma\left(  \boldsymbol{f}\right)  $ of
codimension $\rho$ (respectively of codimension $\rho-1$) such that $s_{0}%
\in\mathcal{P}\left(  \xi\right)  $ for any facet $F\left(  \xi\right)
\supset\tau$.


\end{theorem}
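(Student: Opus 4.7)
The plan is to follow the Denef--Sargos strategy of \cite{D-S}, extending the cancellation argument to the multi-component setting $l\geq 1$. Starting from Theorem \ref{Theorem2a} one has, via Proposition \ref{proposition1}, a log-principalization built from a simple fan $\mathcal{F}_{\boldsymbol{f}}$ with ray set $\mathfrak{D}(\Gamma(\boldsymbol{f}))\cup\Lambda_{\boldsymbol{f}}$. First I would fix an intermediate simplicial (not necessarily simple) fan $\mathcal{F}'$ subordinated to $\Gamma(\boldsymbol{f})$ whose skeleton is exactly $\mathfrak{D}(\Gamma(\boldsymbol{f}))$ (such a fan exists by standard subdivision, cf.\ \eqref{simplisubv}), and then choose $\mathcal{F}_{\boldsymbol{f}}$ as a simple refinement of $\mathcal{F}'$ obtained by adding rays $\Lambda_{\boldsymbol{f}}$. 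Each simple cone $\Delta_j\in\mathcal{F}_{\boldsymbol{f}}$ refines a unique simplicial cone $\Delta'\in\mathcal{F}'$, so the chart-integrals produced in the proof of Theorem \ref{Theorem2a} can be grouped by their enclosing $\Delta'$. The goal is to show that each regrouped sum has poles only in $\bigcup_\xi \mathcal{P}(\xi)$ with $\xi$ running over the rays of $\Delta'$, together with the contribution $-(l+\mathbb{N})$ from the blow-up $\sigma_1$.

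The technical heart would be a version of the Denef--Sargos telescoping identity in the toric setting: after the monomial change of variables associated with each simple cone $\Delta_j$, the chart-integral factorizes as a product of Mellin-type monomial integrals times smooth holomorphic-in-$s$ perturbations coming from the units $\varepsilon(y),\eta(y)$ of Theorem \ref{theoem1} and from the non-vanishing factors $|\widetilde{\boldsymbol{f}}|_K^s$. Summing over all simple cones refining a fixed $\Delta'$ telescopes, following \cite{D-S}, into a single integral whose pole set is controlled only by the rays of $\Delta'$; the extra rays of $\Lambda_{\boldsymbol{f}}$ drop out. The genuinely new feature for $l\geq 1$ is the blow-up $\sigma_1:X_K\to Y_K$ along $Z_Y$, whose exceptional divisor has numerical data $(1,l)$ (a direct local coordinate computation using the strong non-degeneracy of Definition \ref{def1}) and therefore contributes exactly the set $-(l+\mathbb{N})$. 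Since this divisor meets the toric boundary transversally by Proposition \ref{proposition1}(2), its monomial factor enters the Mellin analysis as an independent multiplicative term and does not disturb the decoupage cancellation.

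For part (2), the order of a pole $s_0$ of $Z_\Phi(s,\boldsymbol{f})$ is controlled, via Theorem \ref{Th1}, by the maximal number of divisors with $s_0$ in their candidate-pole set whose intersection meets the support of $\Phi^{\ast}$. In the toric picture, the exceptional divisors indexed by facets $F(\xi_1),\ldots,F(\xi_\rho)$ meet simultaneously if and only if $\tau:=F(\xi_1)\cap\cdots\cap F(\xi_\rho)$ is a face of $\Gamma(\boldsymbol{f})$ of codimension $\rho$; when $s_0\in -(l+\mathbb{N})$ the exceptional divisor of $\sigma_1$ contributes one extra factor, so the required codimension drops to $\rho-1$. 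The main obstacle will be executing the Denef--Sargos telescoping in the presence of the nonconstant smooth factors $|\widetilde{\boldsymbol{f}}|_K^s$ together with the independent blow-up monomial factor from $\sigma_1$: one needs to verify that the telescoping identity survives these perturbations and that no spurious cancellation erases the poles in $-(l+\mathbb{N})$. The strongly non-degenerate hypothesis is expected to provide the required geometric transversality to make all of this go through.
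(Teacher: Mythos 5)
Your proposal diverges fundamentally from the paper's strategy, and the divergence is where the gap lies. You propose to start from the \emph{simple} fan and toric log-principalization of Proposition~\ref{proposition1}, then group the chart-integrals by the coarser simplicial cones and cancel the $\Lambda_{\boldsymbol f}$-contributions via a ``Denef--Sargos telescoping identity.'' But there is no such telescoping identity in \cite{D-S}, nor in this paper. The whole point of the Denef--Sargos method is to \emph{never introduce the extra rays in the first place}: one works with a \emph{simplicial} (generally non-simple) fan $\mathcal{F}_{\boldsymbol f}$ whose skeleton is exactly $\mathfrak{D}(\Gamma(\boldsymbol f))$, partitions the real integration domain $\mathbb{R}_+^n$ (after $\mathcal{L}^{-1}$, the cube $[0,1]^n$) into the sectors $\mathcal{L}^{-1}(\overline\Delta_\tau)$ --- a decoupage --- and performs the monomial change of variables $x_i=\prod_j y_j^{a_{ij}}$ for each simplicial cone directly. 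The map $w$ is still an analytic bijection of $(0,1]^n$ even when $|\det[a_1,\dots,a_n]|>1$; one never needs a smooth toric variety, so one never needs a simple subdivision, so there is nothing to cancel. Trying to cancel $\Lambda_{\boldsymbol f}$-poles a posteriori among chart integrals in a toric resolution is a much harder and, to my knowledge, unexecuted program --- you concede as much (``The main obstacle will be executing the Denef--Sargos telescoping\ldots'').

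There is also a second, independent missing ingredient. After the monomial substitution $w$ for a sector, the quasi-homogeneous pieces $\boldsymbol{f}_{\tau'}=(f_{1,\tau'},\dots,f_{l,\tau'})$ are evaluated on the faces $S_K$ of $[0,1]^n$, and may well have $0\in\mathbb{R}^l$ as a critical value there even when $\boldsymbol f$ is strongly non-degenerate. The paper handles this by introducing the \emph{compatibility} condition (Definition~\ref{def2}) and proving (Lemma~\ref{compatibility}, via Sard's lemma applied to the translate $(\boldsymbol f_\tau\circ\psi)(t+z)$) that for almost every $b$ the homothety $T_b$ makes $\boldsymbol f\circ T_b$ compatible with the decoupage --- a change of variables that does not affect the poles of $I(s)$. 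Your proposal does not mention this issue at all, yet without it the monomial-integral reduction in Theorem~\ref{Theorem4} (splitting into the cases handled by Lemma~\ref{Lema1} and Corollary~\ref{Cor1}) fails at the compatibility-violating points $p\in w^{-1}\{0\}$.

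Finally, a smaller point: you correctly identify that the set $-(l+\mathbb{N})$ corresponds to numerical data $(N,v)=(1,l)$ of the exceptional divisor of $\sigma_1$, but in the actual proof this pole set arises much more elementarily, from $\bigl(\sum_{i=1}^l (y'_{r+i})^2\bigr)^{s/2}$ after passing to hyper-spherical coordinates in Corollary~\ref{Cor1}, which produces a monomial factor $z^{s+l-1}$; no blow-up is invoked. The correct proof structure is: reduce $Z_\Phi$ to the orthant integrals $I(s,\boldsymbol f(\theta\cdot x),\Phi(\theta\cdot x))$, then prove Theorem~\ref{Theorem5} for $I(s)$ via Theorem~\ref{Theorem4} on each sector $I_{\overline\Delta_\tau}(s)$, using compatibility (achieved by $T_b$) plus Lemma~\ref{Lema1} / Corollary~\ref{Cor1}; part (2) then follows by the same face-incidence bookkeeping as in \cite[Th\'eor\`eme 6.1(iii), Corollaire 2.2]{D-S}.
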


\begin{proof}
Define
\[
I_{\Phi}(s,\boldsymbol{f})=I(s,\boldsymbol{f},\Phi)=\int_{\mathbb{R}_{+}^{n}%
}\Phi(x)\left\vert \boldsymbol{f}(x)\right\vert _{\mathbb{R}}^{s}\,\left\vert
dx\right\vert \text{, }\operatorname{Re}(s)>0\text{.}%
\]
This integral defines an analytic function for $\operatorname{Re}(s)>0$;
$I(s,\boldsymbol{f},\Phi)$ and $Z_{\Phi}(s,\boldsymbol{f})$ are related by
\[
Z_{\Phi}(s,\boldsymbol{f})=\sum_{\theta\in\{-1,1\}^{n}}I(s,\boldsymbol{f}%
(\theta\cdot x),\Phi(\theta\cdot x)),
\]
where $\theta\cdot x=(\theta_{1}x_{1},\ldots,\theta_{n}x_{n})$. The result
will follow from the meromorphic continuation of $I_{\Phi}(s,\boldsymbol{f})$,
and the explicit description of its poles, cf. Theorem \ref{Theorem5}.
\end{proof}

\section{Monomial integrals and Decoupages}

From now on we take $K=\mathbb{R}$ and use all the notations introduced in the
previous sections.

\subsection{Some Monomial Integrals}

We give some results about the meromorphic continuation of integrals attached
to monomials that we will use later on. These results are easy variations of
well-known results, see e.g. \cite[Chap. II, \S \ 7, \ Lemme 3]{AVG},
\cite[Lemme 3.1]{D-S}, \cite[Chap. I, Sect. 3.2]{G-S}, and \cite[pp.
101-102]{Igusa3}.

\begin{lemma}
\label{Lema1} Let $\Omega$ be an open neighborhood of the origin in
$\mathbb{R}^{k}\times\mathbb{R}$ and let $g:\Omega\rightarrow\mathbb{R}$ be an
$\mathbb{R}$-analytic function. Let $\phi$ be a smooth function with support
in $\Omega$ and containing the origin. Take $a=\left(  a_{1},\ldots
,a_{k+1}\right)  \in\left(  \mathbb{R}_{+}\mathbb{\smallsetminus}\left\{
0\right\}  \right)  ^{k+1}$ such that $[0,a_{1}]\times\cdots\times
\lbrack0,a_{k+1}]$ is contained in the support of $\phi$. Assume that
$g(y,z)>0$ for $\left(  y,z\right)  $ in the support of $\phi$, and define
\[
J(s)=\int_{0}^{a_{1}}\dots\int_{0}^{a_{k+1}}\left(  \prod_{j=1}^{r}
y_{j}^{sm_{j}+\gamma_{j}-1}\right)  z^{s+l-1}g(y,z)^{s}\phi(y,z)\,\left\vert
dy\wedge dz\right\vert ,\text{ }\operatorname{Re}(s)>0,
\]
with $1\leq r\leq k$ and $m_{j},\gamma_{j}\in\mathbb{N}\setminus\{0\}$ for
$j=1,\ldots,r$, and $l\in\mathbb{N}\setminus\{0\}$. Then the following
assertions hold:

\noindent(1) $J(s)$ is convergent and defines a holomorphic function on
\[
\operatorname{Re}(s)>\max\{-l,-\gamma_{1}/m_{1},\dots,-\gamma_{r}/m_{r}\};
\]

\noindent(2) $J(s)$ admits a meromorphic continuation to the whole complex
plane, with poles of order at most $k+1$. Furthermore, the poles belong to
\[
\bigcup_{1\leq i\leq r}\left(  -\frac{\gamma_{i}+\mathbb{N}}{m_{i}}\right)
\cup\left(  -\left(  l+\mathbb{N}\right)  \right)  .
\]

\noindent(3) Let $\rho$ be a positive integer and let $s_{0}$ be a candidate
pole of $J(s)$ with $s_{0}\notin-\left(  l+\mathbb{N}\right) $ (resp.
$s_{0}\in-\left(  l+\mathbb{N}\right) $). A necessary condition for $s_{0}$ to
be a pole of $J(s)$ of order $\rho$, is that
\[
Card\left\{  i\mid s_{0}\in\frac{-\left(  \gamma_{i}+\mathbb{N}\right)
}{m_{i}}\right\}  \geq\rho\ \text{(resp.}\ \geq\rho-1\text{)} .
\]

\end{lemma}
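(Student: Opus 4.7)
Since $g>0$ is real-analytic on a neighborhood of the support of $\phi$, the function $\psi(y,z,s):=\phi(y,z)\,g(y,z)^{s}=\phi(y,z)\,e^{s\log g(y,z)}$ is smooth in $(y,z)$, compactly supported, and entire in $s$; for fixed $s$ on any vertical strip its derivatives of all orders are uniformly bounded. The integrand of $J(s)$ thus factors as a product of monomial weights in $y_1,\dots,y_r,z$ times a smooth family $\psi(\cdot,s)$ of test functions, so the analysis reduces to the monomial model.

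For (1), I would apply Fubini to the absolute value of the integrand. The inner integrals in the variables $y_{r+1},\dots,y_k$ are harmless (bounded and compactly supported). The integral $\int_{0}^{a_{j}} y_{j}^{\operatorname{Re}(s) m_{j}+\gamma_{j}-1}\,dy_{j}$ converges iff $\operatorname{Re}(s)>-\gamma_{j}/m_{j}$, and $\int_{0}^{a_{k+1}} z^{\operatorname{Re}(s)+l-1}\,dz$ converges iff $\operatorname{Re}(s)>-l$. Combining yields absolute convergence exactly on the stated half-plane, and holomorphy there follows from Morera plus dominated convergence.

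For (2) and (3), I would carry out a Taylor expansion of $\psi(y_1,\dots,y_k,z,s)$ in the variables $(y_{1},\dots,y_{r},z)$ around the origin to order $N$, keeping $(y_{r+1},\dots,y_{k})$ and $s$ as parameters:
\[
\psi(y,z,s)=\sum_{|\alpha|+\beta\leq N}\frac{y_{1}^{\alpha_{1}}\cdots y_{r}^{\alpha_{r}}z^{\beta}}{\alpha!\,\beta!}\,
\bigl(\partial_{y'}^{\alpha}\partial_{z}^{\beta}\psi\bigr)(0,y'',0,s)+R_{N}(y,z,s),
\]
with $R_{N}$ smooth and bounded by a constant (in a vertical strip for $s$) times $\bigl(\sum_{i\leq r}|y_{i}|+|z|\bigr)^{N+1}$. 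Integrating each explicit term decouples into elementary pieces
\[
\int_{0}^{a_{j}}y_{j}^{sm_{j}+\gamma_{j}+\alpha_{j}-1}\,dy_{j}=\frac{a_{j}^{sm_{j}+\gamma_{j}+\alpha_{j}}}{sm_{j}+\gamma_{j}+\alpha_{j}},\qquad
\int_{0}^{a_{k+1}}z^{s+l+\beta-1}\,dz=\frac{a_{k+1}^{s+l+\beta}}{s+l+\beta},
\]
plus a bounded smooth integral in the remaining $y''$-variables, which is holomorphic in $s$. Hence each term contributes simple factors whose singularities lie in $\bigcup_{i=1}^{r}-(\gamma_{i}+\mathbb{N})/m_{i}$ or in $-(l+\mathbb{N})$, and the contribution of the remainder $R_{N}$ is holomorphic on $\operatorname{Re}(s)>\max\{-l-(N+1),-(\gamma_{i}+N+1)/m_{i}\}$ by the same argument as in (1) applied with exponents shifted by $N+1$. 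Letting $N\to\infty$ furnishes the meromorphic continuation to all of $\mathbb{C}$ and shows the poles lie in the prescribed set.

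For the order bound and assertion (3), observe that in a given term the denominator is a product of at most $r+1\leq k+1$ linear factors $(sm_{i}+\gamma_{i}+\alpha_{i})$ and $(s+l+\beta)$, so the pole order at any $s_{0}$ is at most $k+1$. If $s_{0}\notin -(l+\mathbb{N})$, then the factor $(s+l+\beta)^{-1}$ is holomorphic at $s_{0}$ for every $\beta\in\mathbb{N}$, so only the factors $(sm_{i}+\gamma_{i}+\alpha_{i})^{-1}$ can produce a pole; thus order $\rho$ at $s_{0}$ forces at least $\rho$ indices $i$ with $s_{0}\in -(\gamma_{i}+\mathbb{N})/m_{i}$. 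If $s_{0}\in-(l+\mathbb{N})$, a single factor of $(s+l+\beta)^{-1}$ may be absorbed, giving the threshold $\rho-1$. The main subtlety, and the one requiring care, is that one has to rule out cancellations between the contributions of different Taylor orders $N$: this is handled by noting that for each fixed $N$ the ``new'' poles brought in by the $N$-th order terms lie strictly to the left of the previously identified ones, so the principal-part counting is stable as $N\to\infty$.
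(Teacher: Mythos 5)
The paper does not actually prove Lemma \ref{Lema1}; it states that it is an easy variation of well-known results and cites \cite{AVG}, \cite{D-S}, \cite{G-S}, \cite{Igusa3}. So you are supplying a proof where the authors rely on the literature. Your treatment of assertion (1) is fine, and the overall strategy (reduce to a monomial-weighted integral of a smooth compactly supported family $\psi(\cdot,s)$ and expand around the origin) is the standard one.

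However, the remainder step for assertions (2) and (3) has a genuine gap. You use a total-degree-$N$ Taylor expansion with the bound $|R_{N}|\le C\bigl(\sum_{i\le r}|y_{i}|+|z|\bigr)^{N+1}$ and assert that the resulting integral
\[
\int\Bigl(\prod_{j=1}^{r}y_{j}^{\operatorname{Re}(s)m_{j}+\gamma_{j}-1}\Bigr)z^{\operatorname{Re}(s)+l-1}\,|R_{N}|\,dy\,dz
\]
converges on $\operatorname{Re}(s)>\max\{-l-(N+1),\ -(\gamma_{i}+N+1)/m_{i}\}$. That is not correct: expanding the bound gives a sum of monomials $y^{\alpha}z^{\beta}$ with $|\alpha|+\beta=N+1$, and a monomial such as $\alpha=(N+1)e_{1},\beta=0$ still requires $\operatorname{Re}(s)>-\gamma_{j}/m_{j}$ for every $j\neq1$ and $\operatorname{Re}(s)>-l$. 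Hence the remainder integral converges only on essentially the original half-plane, and the half-plane does not widen as $N\to\infty$, so the argument fails to produce the meromorphic continuation. (Concretely, with $r=2$, $m_{1}=m_{2}=1$, $\gamma_{1}=\gamma_{2}=1$, $l=1$ and $\operatorname{Re}(s)<-1$, the term $y_{1}^{N+1}$ in $(y_{1}+y_{2}+z)^{N+1}$ makes the remainder integral diverge because of the $y_{2}$ and $z$ integrations, no matter how large $N$ is.)

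The fix is to expand \emph{variable by variable} rather than to total degree. For instance, apply the one-variable Taylor/Hadamard decomposition $\psi=\sum_{\alpha_{1}\le N}y_{1}^{\alpha_{1}}c_{\alpha_{1}}+y_{1}^{N+1}\mathcal R_{1}$ (with $c_{\alpha_{1}}$ and $\mathcal R_{1}$ smooth and compactly supported), iterate in $y_{2},\dots,y_{r},z$, and multiply out. Every resulting summand either consists of pure monomials $y^{\alpha}z^{\beta}$ (with $\alpha_{j},\beta\le N$) whose integration yields the explicit product of the factors $(sm_{j}+\gamma_{j}+\alpha_{j})^{-1}$ and $(s+l+\beta)^{-1}$, or contains at least one factor $y_{j}^{N+1}$ or $z^{N+1}$ multiplying a smooth compactly supported function; in the latter case the corresponding integral does converge on $\operatorname{Re}(s)>-(\gamma_{j}+N+1)/m_{j}$ (resp.\ $>-(l+N+1)$), since the remaining factors contribute only the explicit rational functions. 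Equivalently one may integrate by parts $N$ times in each of $y_{1},\dots,y_{r},z$, as in \cite{G-S}. With this correction the rest of your argument --- counting the linear denominators to bound the order by $r+1\le k+1$, and the case distinction for whether $s_{0}\in-(l+\mathbb N)$ --- goes through as you wrote it.
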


\medskip

\begin{corollary}
\label{Cor1} Let $r$, $k$ and $l$ be natural numbers such that $1\leq r\leq k$
and $k+1-r-l\geq0$. Let $U$ be a neighborhood of the origin of $\mathbb{R}%
^{l}$. Let $\theta$ be a smooth function with compact support contained in
$[0,1]^{r}\times U\times\lbrack0,1]^{k+1-r-l}$. Set
\[
J_{1}(s):=\int\limits_{[0,1]^{r}\times U\times\lbrack0,1]^{k+1-r-l}}%
\prod_{j=1}^{r}y_{j}^{sm_{j}+\gamma_{j}-1}\left(  \sum_{i=r+1}^{r+l}y_{i}%
^{2}\right)  ^{s/2}\theta(y)\,\left\vert dy\right\vert \text{, }%
\operatorname{Re}(s)>0.
\]
Then $J_{1}(s)$ has a meromorphic continuation to $\mathbb{C}$ and its poles
satisfy all the conclusions in Lemma \ref{Lema1}.
\end{corollary}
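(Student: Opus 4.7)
The plan is to reduce the integral $J_1(s)$ to the setting of Lemma \ref{Lema1} via polar coordinates in the middle block of variables $(y_{r+1},\ldots,y_{r+l})$. Writing these coordinates as $\rho\omega$ with $\rho\ge 0$ and $\omega\in S^{l-1}$, one has $\bigl(\sum_{i=r+1}^{r+l}y_i^2\bigr)^{s/2}=\rho^{s}$ and $dy_{r+1}\wedge\cdots\wedge dy_{r+l}=\rho^{l-1}\,d\rho\,d\sigma(\omega)$, so the integrand acquires exactly a factor $\rho^{s+l-1}$, matching the $z^{s+l-1}$ of Lemma \ref{Lema1}. Since $\theta$ has compact support in $U$, we may restrict $\rho$ to a compact interval $[0,R]$.

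Applying Fubini, one rewrites $J_1(s)=\int_{S^{l-1}}J^{\omega}(s)\,d\sigma(\omega)$, where $J^{\omega}(s)$ is the integral of $\prod_{j=1}^{r}y_j^{sm_j+\gamma_j-1}\,\rho^{s+l-1}\,\tilde\theta(y_1,\ldots,y_r,\rho,\omega,y_{r+l+1},\ldots,y_{k+1})$ against $|dy\wedge d\rho|$, with $\tilde\theta(y,\rho,\omega):=\theta(y_1,\ldots,y_r,\rho\omega_1,\ldots,\rho\omega_l,y_{r+l+1},\ldots,y_{k+1})$ smooth and compactly supported. For each fixed $\omega\in S^{l-1}$, $J^{\omega}(s)$ fits the hypotheses of Lemma \ref{Lema1} with $g\equiv 1$, with $\rho$ in the role of $z$, and with effective total dimension $k+2-l\le k+1$. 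That lemma then yields, for each $\omega$, the meromorphic continuation of $J^{\omega}(s)$ to $\mathbb{C}$ with poles of order at most $k+2-l$, located inside $\bigcup_{i=1}^{r}\bigl(-(\gamma_i+\mathbb{N})/m_i\bigr)\cup\bigl(-(l+\mathbb{N})\bigr)$, and obeying the order bound (3).

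The remaining task, and the main technical point, is to carry these three conclusions through the angular integration over the compact manifold $S^{l-1}$. The cleanest way is to revisit the proof of Lemma \ref{Lema1} at the level of $\tilde\theta$: iterated integrations by parts in $\rho$ and in $y_1,\ldots,y_r$ produce an explicit functional equation expressing $J^{\omega}(s)$ as a finite sum of integrals holomorphic on a half-plane $\mathrm{Re}(s)>-A$ (for any preassigned $A$), divided by a product of linear factors in $s$ built from the $m_i,\gamma_i$ and $l$. The coefficients of this expansion are bounded by finite $C^{\infty}$-seminorms of $\tilde\theta$; since $\tilde\theta(\cdot,\cdot,\omega)$ depends smoothly on $\omega$ and $S^{l-1}$ is compact, these seminorms are uniformly bounded in $\omega$, and all boundary terms vanish either by the compact support of $\tilde\theta$ or by the factors $\rho^{s+l-1}$, $y_j^{sm_j+\gamma_j}$ in a sufficiently large right half-plane. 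Integrating the functional equation against $d\sigma(\omega)$ then yields the analogous functional equation for $J_1(s)$, from which assertions (1), (2), (3) of Lemma \ref{Lema1} transfer verbatim. The main obstacle is therefore exactly this uniformity and boundary control; no new pole candidates appear because the only denominators generated are those already accounted for by Lemma \ref{Lema1}.
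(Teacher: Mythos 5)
Your proof is correct and follows exactly the route the paper takes: the paper's own proof is the one-line statement ``The result follows from the previous lemma by using hyper-spherical coordinates,'' and you have simply spelled out the details (the factor $\rho^{s}\cdot\rho^{l-1}=\rho^{s+l-1}$ matching the $z^{s+l-1}$ of Lemma~\ref{Lema1}, and the uniformity in $\omega\in S^{l-1}$ needed to integrate the angular parameter) that the paper leaves implicit.
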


\begin{proof}
The result follows from the previous lemma by using hyper-spherical coordinates.
\end{proof}

\subsection{Decoupages in $\mathbb{R}^{n}$ and compatible mappings}

\label{Section comp}

Let $\boldsymbol{f}=(f_{1},\ldots,f_{l}):U(\subset\mathbb{R}^{n}%
)\longrightarrow\mathbb{R}^{l}$ be an analytic mapping, strongly
non-degenerate at the origin with respect to $\Gamma\left(  \boldsymbol{f}%
\right)  $.


We fix a simplicial fan $\mathcal{F}_{\boldsymbol{f}}$\ subordinated to
$\Gamma(\boldsymbol{f})$, and assume that the cones of $\mathcal{F}%
_{\boldsymbol{f}}$ of dimension $n$ are $\overline{\Delta}_{1},\ldots
,\overline{\Delta}_{m}$.

\begin{definition}
\textrm{Let $X\subseteq\mathbb{R}^{n}$ be a measurable set in the sense of
Lebesgue. A \textit{decoupage} of $X$ is a finite family $\mathcal{D}%
=\{D_{1},\ldots,D_{m}\}$ of measurable sets satisfying }

\textrm{\noindent(1) $D_{i}\cap D_{j}$ has measure zero for $i\neq j$, and }

\textrm{\noindent(2) $X\smallsetminus(\cup_{i=1}^{m}D_{i})$ and $(\cup
_{i=1}^{m}D_{i})\smallsetminus X$ have measure zero. }
\end{definition}

\smallskip Define
\[
\mathcal{L}:(0,1]^{n}\rightarrow\mathbb{R}_{+}^{n}:(x_{1},\ldots
,x_{n})\rightarrow(-\ln x_{1},\ldots,-\ln x_{n}).
\]
Then $\mathcal{L}$ is an $\mathbb{R}$-analytic isomorphism, and $\{\mathcal{L}%
^{-1}(\overline{\Delta}_{j})\}_{1\leq j\leq m}$ is a decoupage of $[0,1]^{n}$.
In addition, for any smooth function $\phi(x)$,
\[
\int_{\lbrack0,1]^{n}}\phi(x)\,\left\vert dx\right\vert =\sum_{j=1}^{m}%
\int_{\mathcal{L}^{-1}(\overline{\Delta}_{j})}\phi(x)\,\left\vert
dx\right\vert .
\]

\smallskip In the sequel we will use a decoupage of the domain of integration
to give a short list of candidate poles for local zeta functions. It may
happen that a mapping $\left(  f_{1,\tau}\left(  x\right)  ,\ldots,f_{l,\tau
}\left(  x\right)  \right)  $, restricted to certain components of the
boundary of a decoupage, has singularities. These cases will turn out be to
annoying, but they are rare and they can be removed by using homotheties, cf.
Lemma \ref{compatibility}.

Take vectors $a_{1},\ldots,a_{n}$ $\in\mathbb{N}^{n}$ that are linearly
independent over $\mathbb{R}$, and denote
\[
\overline{\Delta}:=\{ \sum_{i=1}^{n}\lambda_{i}a_{i}\mid\lambda_{i}
\in\mathbb{R}_{+}\}
\]
for the closed cone spanned by them.
For $K\subset\{1,\ldots,n\}$, set
\[
\Delta_{K}:=\{\sum_{i\in K}\lambda_{i}a_{i}\mid\lambda_{i}\in\mathbb{R}
_{+},\ \lambda_{i}>0\}
\]
and $S_{K}:=\mathcal{L}^{-1}(\Delta_{K})$, with the convention that
$S_{K}=\{(1,\ldots,1)\}$ if $K=\emptyset$. We will call $S_{K}$ \textit{a
sector}.

\begin{definition}
\textrm{\label{def2} We say that $\boldsymbol{f}$ \textit{is compatible with
the sector} $S_{K}$ if the following condition is satisfied. For each
partition $\{I,J,K\}$ of $\{1,\ldots,n\}$, with $I\neq\emptyset,\ J\neq
\emptyset$, and such that the face $\tau=\cap_{i\in I}F(a_{i})$ is compact and
non-empty, the restriction to $S_{K}$ of the map $\boldsymbol{f}_{\tau
}:=\left(  f_{1,\tau},\dots,f_{l,\tau}\right) $ does not admit $\left(
0,\ldots,0\right)  \in\mathbb{R}^{l}$ as a critical value. }

\textrm{We say that $\boldsymbol{f}$ is compatible with $\mathcal{L}%
^{-1}(\overline{\Delta})$, if $\boldsymbol{f}$ is compatible with the sector
$S_{K}$ for any $K\subset\{1,\ldots,n\}$. }

\textrm{We say that $\boldsymbol{f}$ is \textit{compatible with the decoupage}
$\mathcal{L}^{-1}(\mathcal{F}_{\boldsymbol{f}})=\{\mathcal{L}^{-1}%
(\overline{\Delta})\mid\overline{\Delta}\in\mathcal{F}_{\boldsymbol{f}}$ with
$\dim\overline{\Delta}=n\}$ (or \textit{compatible with} $\mathcal{F}%
_{\boldsymbol{f}}$\textit{\ }), if $\boldsymbol{f}$ is compatible with
$\mathcal{L}^{-1}(\overline{\Delta})$, for any $n$-dimensional cone
$\overline{\Delta}$ in $\mathcal{F}_{\boldsymbol{f}}$. }
\end{definition}

\begin{remark}
\textrm{Note that the condition `the restriction to $S_{K}$ of $\boldsymbol{f}%
_{\tau}$ does not admit $\left(  0,\ldots,0\right)  \in\mathbb{R}^{l}$ as a
critical value' is equivalent to the two following conditions: }

\textrm{\noindent(1) if $Card(K)\geq l$, then $rank \left[  \frac{\partial
f_{i,\tau}}{\partial x_{j}}(z)\right]  =l,$ for any $z\in\{z\in S_{K}\mid
f_{1,\tau}(z)=\dots=f_{l,\tau}(z)=0\}$, and }

\textrm{\noindent(2) if $0\leq Card(K)\leq l-1$, then for any $z_{0}\in S_{K}%
$, there exists an index $i_{0}=i_{0}(z_{0})$, such that $f_{i_{0},\tau}%
(z_{0})\neq0.$ }
\end{remark}

\begin{remark}
\textrm{\label{note1}Set $n=l=2$. Since any partition $\{I,J,K\}$ of
$\{1,2\}$, with $I\neq\emptyset$ and $\ J\neq\emptyset$, requires that
$K=\emptyset$ and that $Card(I)=1$, any $\tau$ in Definition \ref{def2} is a
facet of $\Gamma(\boldsymbol{f})$. Therefore $\boldsymbol{f}$ is compatible
with a simplicial fan $\mathcal{F}_{\boldsymbol{f}}$ if and only if
$f_{1,\gamma}(1,1)\neq0$ or $f_{2,\gamma}(1,1)\neq0$, for any compact facet
$\gamma$ of $\Gamma(\boldsymbol{f})$. }
\end{remark}

\begin{example}
\textrm{Set $\boldsymbol{f}=(x_{2}^{a}-x_{1}^{b},x_{1}^{c}-x_{2}^{d})$, with
$a\leq d$ and $b\leq c$. Then $\boldsymbol{f}$ is not compatible with any
simplicial fan $\mathcal{F}_{\boldsymbol{f}}$, since $f_{i,\gamma}(1,1)=0$
($i=1,2$) for the facet $\gamma$ containing $(0,a)$ and $(b,0)$. Set
$T_{(3,2)}:(\mathbb{R}_{+}\setminus\{0\})^{2}\rightarrow(\mathbb{R}%
_{+}\setminus\{0\})^{2} ;\ (x_{1},x_{2})\mapsto(3x_{1},2x_{2})$. Then
$\boldsymbol{f}\circ T_{(3,2)}$ is compatible with every simplicial fan
$\mathcal{F}_{\boldsymbol{f}}$. }
\end{example}

\begin{example}
\textrm{Set $\mathbf{g}=(x_{2}^{a}+x_{1}^{b},x_{1}^{c})$, with $c\leq b$. Then
$\mathbf{g}$ is compatible with every simplicial fan $\mathcal{F}%
_{\boldsymbol{f}}$. }
\end{example}

\subsection{Compatibility and homotheties}

Subsequently we will use only the part of the following lemma, stating that
the sets $T_{l,k^{\prime}}$ have measure zero. But we think it is natural to
state and prove it as below.

\begin{lemma}
\label{sard} Assume $l\leq n$. Let $W_{0}\times W_{1}$ be an open neighborhood
of $(0,0)\in\mathbb{R}^{n}\times\mathbb{R}^{n}$, and $\mathbf{h}:W_{0}\times
W_{1}\rightarrow\mathbb{R}^{l} : (t,z)\mapsto\mathbf{h}(t,z)=(h_{1}
(t,z),\ldots,h_{l}(t,z))$ an ana\-ly\-tic mapping such that $(0,\ldots
,0)\in\mathbb{R}^{l}$ is not a critical value of $\mathbf{h}$. Set
\[
M:=\left\{  (t,z)\in W_{0}\times W_{1}\mid h_{1}(t,z)=\dots=h_{l}%
(t,z)=0\right\}  ,
\]
$\pi:M\rightarrow\mathbb{R}^{n} : (t,z)\mapsto t$, and
\[
T_{l^{\prime},k^{\prime}}:=\left\{
\begin{array}
[c]{l}%
t\in W_{0}\mid\mathbf{h}(t,z)=0\text{ for some }z\in W_{1}\text{ and }\\
rank \left[  \frac{\partial h_{i}}{\partial z_{k}}(t,z)\right]
_{\substack{1\leq i\leq l^{\prime}\\k^{\prime}\leq k\leq n}}<\min\{l^{\prime
},n-k^{\prime}+1\}
\end{array}
\right\}
\]
for $1\leq l^{\prime}\leq l$ and $1\leq k^{\prime}\leq n$. Then $T_{l^{\prime
},k^{\prime}}$ has measure zero, and the set of critical values of $\pi$ is
the union of the sets $T_{l^{\prime},k^{\prime}}$ and of the similar sets
obtained by permuting the indices.
\end{lemma}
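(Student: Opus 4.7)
The key tool is Sard's theorem in tandem with the regular-value hypothesis. Since $\mathbf{0}$ is not a critical value of $\mathbf{h}$, the zero locus $M$ is a smooth real-analytic submanifold of $W_0\times W_1$ of codimension $l$, hence of dimension $2n-l$.

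I would first dispose of the identification of the critical values of $\pi$. At $(t,z)\in M$, one has $T_{(t,z)}M=\ker D\mathbf{h}(t,z)$, and $d\pi$ restricted to this kernel hits a given $\dot t\in\mathbb{R}^n$ iff there exists $\dot z\in\mathbb{R}^n$ with $D_z\mathbf{h}(t,z)\dot z=-D_t\mathbf{h}(t,z)\dot t$. Hence $(t,z)$ is a critical point of $\pi|_M$ precisely when the Jacobian $\left[\partial h_i/\partial z_k\right]_{1\le i\le l,\,1\le k\le n}$ has rank strictly less than $l$. The rank-deficiency of an $l\times n$ matrix is characterized, via the vanishing of minors, by the existence of a permutation of rows and columns after which some subblock $[\partial h_i/\partial z_k]_{1\le i\le l',\,k'\le k\le n}$ has rank strictly less than $\min\{l',n-k'+1\}$; this is exactly the $T_{l',k'}$ condition, which gives the claimed description of the critical values as the union of the $T_{l',k'}$ together with their permutations.

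For the measure-zero claim, fix $(l',k')$ and a permutation. I would introduce the auxiliary real-analytic subvariety $M^{(l')}:=\{h_1=\cdots=h_{l'}=0\}$; its smooth locus is a manifold of dimension $2n-l'$ and, by the regular-value hypothesis on $\mathbf{h}$, it contains $M$. Consider then the projection
\[
p_{l',k'}:M^{(l')}\longrightarrow W_0\times\mathbb{R}^{k'-1},\qquad (t,z)\mapsto (t,z_1,\ldots,z_{k'-1}).
\]
The same linear-algebra computation as above identifies the critical locus of $p_{l',k'}$ with the set of points where the subblock $[\partial h_i/\partial z_k]_{1\le i\le l',\,k'\le k\le n}$ fails to have maximal rank; by Sard's theorem its image has measure zero in $W_0\times\mathbb{R}^{k'-1}$. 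To push this down to measure zero of $T_{l',k'}$ inside $W_0$, I would combine Fubini with a parametric Sard argument: for almost every value of $(z_1,\ldots,z_{k'-1})$ the corresponding slice of $M^{(l')}$ is smooth, and the critical values of its projection to $W_0$ form a measure-zero set in $W_0$; the union of these, assembled by the analytic structure of the critical locus, contains $T_{l',k'}$.

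The main obstacle is precisely this descent step. Sard's theorem delivers measure zero in the target $W_0\times\mathbb{R}^{k'-1}$ of $p_{l',k'}$, while an arbitrary projection to $W_0$ need not preserve measure zero (a set $A\times\{0\}\subset\mathbb{R}^n\times\mathbb{R}^{k'-1}$ has measure zero but need not project to a measure-zero subset of $\mathbb{R}^n$). The remedy is to exploit the semianalytic nature of the critical locus on $M^{(l')}$---its well-controlled dimension allows a parametric/sliced Sard argument to go through---so one can replace the single projection by a family of projections on slices whose critical values, by Fubini, assemble into a measure-zero subset of $W_0$.
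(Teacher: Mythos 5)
Your identification of the critical locus of $\pi|_M$ is correct, and so is the idea of invoking Sard. But the core of your measure-zero argument does not go through, and you have actually put your finger on the fatal flaw yourself. Applying Sard to $p_{l',k'}:M^{(l')}\to W_0\times\mathbb{R}^{k'-1}$ gives you a measure-zero set in the \emph{product} $W_0\times\mathbb{R}^{k'-1}$, and there is simply no general principle that lets you conclude measure zero of the projection to $W_0$. The ``parametric/sliced Sard $+$ Fubini'' remedy you sketch at the end does not close the gap: for a.e.\ fixed slice $(z_1,\dots,z_{k'-1})$ you would indeed get a null set in $W_0$, but you then need the union over an uncountable family of slices to remain null, and Fubini in the product only controls the set in $W_0\times\mathbb{R}^{k'-1}$, not the projection to $W_0$. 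Nothing in your argument prevents the union from having positive measure. So this is a genuine gap, not a technical detail to be smoothed over.

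The paper avoids this entirely by never leaving the target $W_0$. Around each point $(a,b)\in M$ it applies the implicit function theorem to solve $h_1=\dots=h_l=0$ for $l$ of the $2n$ variables (some among the $t$'s, some among the $z$'s), obtaining a local chart of $M$ in which $\pi$ is written explicitly as a map from an open subset of $\mathbb{R}^{2n-l}$ into $W_0$. The Jacobian of this local $\pi$ is computed in block form, its rank deficiency is shown (via the chain rule, using the invertibility of the chosen $l\times l$ minor of $D\mathbf{h}$) to be expressible through a submatrix of $[\partial h_i/\partial z_k]$, and Sard is applied \emph{directly} to this map into $W_0$. Because the Sard target is $W_0$ itself, no descent step is needed. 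The correct repair of your argument is therefore not a sliced Sard, but to replace $p_{l',k'}$ by the local restrictions of $\pi$ itself — solve for a mixture of $t$- and $z$-variables by the implicit function theorem, so that the domain is a piece of $M$ and the codomain is $W_0$, and then invoke Sard chart by chart (with a countable subcover of $M$).

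A smaller point: your claim that rank-deficiency of the $l\times n$ block $[\partial h_i/\partial z_k]$ is ``characterized'' by rank-deficiency of some permuted subblock $[\partial h_i/\partial z_k]_{1\le i\le l',\,k'\le k\le n}$ is one-directional. A subblock may be rank-deficient while the full block has rank $l$, so the union $\bigcup T_{l',k'}$ is a priori \emph{larger} than the set of critical values; what you need, and what the paper exhibits via the chart-by-chart analysis, is the containment of the critical values \emph{in} the union of the $T_{l',k'}$, and then the measure-zero statement for each $T_{l',k'}$ obtained by Sard applied locally to $\pi$. As it stands, your write-up asserts an equivalence it does not prove, and bases the measure-zero argument on the wrong map.
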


\begin{proof}
We establish the result by showing explicitly that the set of critical values
of $\pi$ restricted to a neighborhood $M_{(a,b)}$ of point $\left(
a,b\right)  $ in $M$ has the form $T_{l^{\prime},k^{\prime}}$. Then by Sard's
lemma $T_{l^{\prime},k^{\prime}}$ has measure zero, for any $l^{\prime
},k^{\prime}$. This explicit description is achieved as follows: given
$(a,b)\in M$, by the implicit function theorem, we can solve the system
$\left\{  h_{1}(t,z)=\dots=h_{l}(t,z)=0\right\}  $ locally around $\left(
a,b\right)  $, for $l$ variables in terms of the others. Depending on the
selection made upon the set of variables several cases occur.

\textbf{Case 1.} All the depending variables are taken from the first $n$
variables. After renaming the variables if necessary, we assume that%

\[
\det\left[  \frac{\partial h_{i}}{\partial t_{j}}\left(  (a,b)\right)
\right]  _{\substack{1\leq i\leq l\\1\leq j\leq l}}\neq0.
\]

Set $t=(t^{\prime},t^{\prime\prime})$, where $t^{\prime}=(t_{1},\ldots,t_{l})$
and $t^{\prime\prime}=(t_{l+1},\ldots,t_{n})$. We also set $(a,b)=(a^{\prime
},a^{\prime\prime},b)\in(\mathbb{R}^{l}\times\mathbb{R}^{n-l}\times
\mathbb{R}^{n})\cap M$. Then there exist an open set $G\subset\mathbb{R}%
^{2n-l}$, containing $(a^{\prime\prime},b)$, and a unique analytic function
$\mathbf{g}:G\rightarrow\mathbb{R}^{l}$, such that $\mathbf{g}(a^{\prime
\prime},b)=a^{\prime}$ and $\mathbf{h}(\mathbf{g} (t^{\prime\prime
},z),t^{\prime\prime},z)=0$ for any $(t^{\prime\prime},z)\in G$.

Set $M_{(a,b)}=\{(t^{\prime},t^{\prime\prime},z)\in\mathbf{g}(G)\times G\mid
t_{1}=g_{1}(t^{\prime\prime},z),\ldots,t_{l}=g_{l}(t^{\prime\prime
},z)\}\subseteq M$. Then%

\[%
\begin{array}
[c]{cccc}%
\pi\mid_{M_{(a,b)}}: & M_{(a,b)} & \rightarrow & \mathbb{R}^{n}\\
&  &  & \\
& \left( t^{\prime\prime},z\right)  & \mapsto & \left(  g_{1}(t^{\prime\prime
},z),\ldots,g_{l}(t^{\prime\prime},z),t^{\prime\prime}\right)  ,
\end{array}
\]
and the Jacobian matrix of $\pi\mid_{M_{(a,b)}}$ is
\[
J(t^{\prime\prime},z)=\left[
\begin{matrix} \frac{\partial g_1}{\partial t_{l+1}}&\dots&\frac{\partial g_1}{\partial t_{n}}&\frac{\partial g_1}{\partial z_{1}}&\dots&\frac{\partial g_1}{\partial z_{n}}\\ \hdotsfor{6}\\ \frac{\partial g_l}{\partial t_{l+1}}&\dots&\frac{\partial g_l}{\partial t_{n}}&\frac{\partial g_l}{\partial z_{1}}&\dots&\frac{\partial g_l}{\partial z_{n}}\\
1&\dots&0&0&\dots&0\\ \hdotsfor{6}\\ 0&\dots&1&0&\dots&0 \end{matrix}\right]
(t^{\prime\prime},z).
\]
The point $(t^{\prime},t^{\prime\prime},z)$ is a critical point for $\pi$ if
$rank J(t^{\prime\prime},z)<n$; this means that $rank \left[  \frac{\partial
g_{i}}{\partial z_{k}}(t^{\prime\prime},z)\right]  _{\substack{1\leq i\leq
l\\1\leq k\leq n}}<l$. Since $\sum_{j=1}^{l}\left(  \frac{\partial h_{i}%
}{\partial t_{j}}\right)  \left(  \frac{\partial g_{j}}{\partial z_{k}%
}\right)  = \frac{\partial h_{i}}{\partial z_{k}}$ and the matrix
\linebreak$\left[  \frac{\partial h_{i}}{\partial t_{j}}\right]
_{\substack{1\leq i\leq l\\1\leq j\leq l}}$ has maximal rank around $(a,b)$,
this last condition is equivalent to
\[
rank\left[  \frac{\partial h_{i}}{\partial z_{k}}((t^{\prime},t^{\prime\prime
},z))\right]  _{\substack{1\leq i\leq l\\1\leq k\leq n}}<l.
\]
Hence $T_{l,1}$ is the set of critical values of $\pi\mid_{M_{(a,b)}}$.

\textbf{Case 2}. Some of the depending variables are taken from the first $n$
variables ($t$-variables) and the rest are taken from the last $n$ variables
($z$-variables). After renaming the variables if necessary, we assume that
\[
\det\left[  \left[  \frac{\partial h_{i}}{\partial t_{j}}\left(  (a,b)\right)
\right]  _{_{\substack{1\leq i\leq l\\n-l^{\prime}+1\leq j\leq n}}}\left[
\frac{\partial h_{i}}{\partial z_{j}}\left(  (a,b)\right)  \right]
_{_{\substack{1\leq i\leq l\\1\leq j\leq l^{\prime\prime}}}}\right]  \neq0,
\]
with $l=l^{\prime}+l^{\prime\prime}$. Put $t^{\prime}=(t_{1},\ldots
,t_{n-l^{\prime}}),\ t^{\prime\prime}=(t_{n-l^{\prime}+1},\ldots
,t_{n}),\ z^{\prime}=(z_{1},\ldots,z_{l^{\prime\prime}})$ and $z^{\prime
\prime}=(z_{l^{\prime\prime}+1},\ldots,z_{n})$, hence $t=(t^{\prime}%
,t^{\prime\prime})$ and $z=(z^{\prime},z^{\prime\prime})$. Then the depending
variables are $t^{\prime\prime}$ and $z^{\prime}$. Consider a point
$(a,b)=(a^{\prime},a^{\prime\prime},b^{\prime},b^{\prime\prime})\in
(\mathbb{R}^{n-l^{\prime}}\times\mathbb{R}^{l^{\prime}}\times\mathbb{R}
^{l^{\prime\prime}}\times\mathbb{R}^{n-l^{\prime\prime}})\cap M$.  Then there
exist an open set $Q\subset\mathbb{R}^{2n-l}$, containing $(a^{\prime
},b^{\prime\prime})$, and a unique analytic function $\mathbf{q}
:Q\rightarrow\mathbb{R}^{l}$ such that $\mathbf{q}(a^{\prime},b^{\prime\prime
})=(a^{\prime\prime},b^{\prime})$ and $\mathbf{h}(t^{\prime},\mathbf{q}
(t^{\prime},z^{\prime\prime}),z^{\prime\prime})=0$, for any $(t^{\prime
},z^{\prime\prime})\in Q$. Set
\[
M_{(a,b)}:=
\]
\[
\left\{
\begin{array}
[c]{l}%
(t^{\prime},t^{\prime\prime},z^{\prime},z^{\prime\prime})\in W_{0}\times
W_{1}\mid(t^{\prime},z^{\prime\prime})\in Q,\,(t^{\prime\prime},z^{\prime}
)\in\mathbf{q}(Q)\text{ and }\\
t_{n-l^{\prime}+1}=q_{1}(t^{\prime},z^{\prime\prime}),\ldots,t_{n}
=q_{l^{\prime}}(t^{\prime},z^{\prime\prime}),z_{1}=q_{l^{\prime}+1}(t^{\prime
},z^{\prime\prime}),\ldots,z_{l^{\prime\prime}}=q_{l}(t^{\prime}
,z^{\prime\prime})
\end{array}
\right\} .
\]
Then
\[%
\begin{matrix}
\pi\mid_{M_{(a,b)}}: & M_{(a,b)} & \rightarrow & \mathbb{R}^{n}\\
& (t^{\prime},z^{\prime\prime}) & \mapsto & (t^{\prime},q_{1}(t^{\prime
},z^{\prime\prime}),\ldots,q_{l^{\prime}} (t^{\prime},z^{\prime\prime})),
\end{matrix}
\]
and the corresponding Jacobian matrix is
\[
J(t^{\prime},z^{\prime\prime})=\left[
\begin{matrix} 1&\dots&0&0&\dots&0\\ \hdotsfor{6}\\ 0&\dots&1&0&\dots&0\\ \frac{\partial q_1}{\partial t_{1}}&\dots&\frac{\partial q_1}{\partial t_{n-l'}}&\frac{\partial q_1}{\partial z_{l''+1}}&\dots&\frac{\partial q_{1}}{\partial z_{n}}\\ \hdotsfor{6}\\ \frac{\partial q_{l'}}{\partial t_{1}}&\dots&\frac{\partial q_{l'}}{\partial t_{n-l'}}&\frac{\partial q_{l'}}{\partial z_{l''+1}}&\dots&\frac{\partial q_{l'}}{\partial z_{n}} \end{matrix}\right]
(t^{\prime},z^{\prime\prime}).
\]
The point $(t^{\prime},t^{\prime\prime},z^{\prime},z^{\prime\prime})$ is a
critical point for $\pi\mid_{M_{(a,b)}}$ if $rank J(t^{\prime},z^{\prime
\prime}) <n$, that is,
\[
rank \left[  \frac{\partial q_{i}}{\partial z_{k}}(t^{\prime},z^{\prime\prime
})\right]  _{\substack{1\leq i\leq l^{\prime}\\k^{\prime}\leq k\leq
n}}<l^{\prime}=\min\{l^{\prime},n-l^{\prime\prime}\},
\]
where $k^{\prime}=l^{\prime\prime}+1$. As above, this is equivalent with $rank
\left[  \frac{\partial h_{i}}{\partial z_{k}}(t^{\prime},t^{\prime\prime
},z^{\prime},z^{\prime\prime})\right]  _{\substack{1\leq i\leq l^{\prime
}\\k^{\prime}\leq k\leq n}}<l^{\prime}$, hence $T_{l^{\prime},k^{\prime}}$ is
the set of critical values of $\pi\mid_{M_{(a,b)}}$.
\end{proof}

\bigskip Given $b\in(\mathbb{R}_{+}\setminus\{0\})^{n}$, we define
\[%
\begin{matrix}
T_{b}: & (\mathbb{R}^{\times})^{n} & \rightarrow & (\mathbb{R}^{\times})^{n}\\
& (x_{1},\ldots,x_{n}) & \mapsto & (b_{1}x_{1},\ldots,b_{n}x_{n}).
\end{matrix}
\]

\begin{lemma}
\label{compatibility} Let $\boldsymbol{f}$ be a strongly non-degenerate
analytic mapping at the origin with respect to $\Gamma(\boldsymbol{f})$. Let
$\mathcal{L}^{-1}(\mathcal{F}_{\boldsymbol{f}})$ be a decoupage induced by a
simplicial fan subordinated to $\Gamma(\boldsymbol{f})$. Then for almost every
$b\in(\mathbb{R}_{+}\setminus\{0\})^{n}$, the mapping $\boldsymbol{f}\circ
T_{b}$ is compatible with $\mathcal{L}^{-1}(\mathcal{F}_{\boldsymbol{f}})$.
\end{lemma}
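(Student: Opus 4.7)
My plan is to reduce the statement to Sard's lemma, exploiting the key observation that $(\boldsymbol{f} \circ T_b)_\tau = \boldsymbol{f}_\tau \circ T_b$ for every face $\tau$ of $\Gamma(\boldsymbol{f})$ and every $b \in (\mathbb{R}_+ \setminus \{0\})^n$. This holds because $T_b$ only rescales each monomial of $\boldsymbol{f}$ by a positive constant, so $\Gamma(\boldsymbol{f} \circ T_b) = \Gamma(\boldsymbol{f})$ and the same fan $\mathcal{F}_{\boldsymbol{f}}$ serves for both. Consequently, checking compatibility for $\boldsymbol{f} \circ T_b$ at $z \in S_K$ reduces to an analogous statement for $\boldsymbol{f}_\tau$ at the point $b \cdot z$. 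Since only finitely many data $(\overline{\Delta}, \{I,J,K\}, \tau)$ are involved, it will suffice, for each such datum with $\tau$ compact and non-empty, to show that the bad set of $b$'s has measure zero.

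Fix such a datum and introduce the analytic map
\[
\mathbf{h} : (\mathbb{R}_+ \setminus \{0\})^n \times S_K \to \mathbb{R}^l, \quad (b,z) \mapsto \boldsymbol{f}_\tau(b \cdot z).
\]
The chain rule gives $\partial h_i / \partial b_j = z_j (\partial f_{i,\tau}/\partial x_j)(b \cdot z)$, so the $b$-Jacobian of $\mathbf{h}$ equals the $x$-Jacobian of $\boldsymbol{f}_\tau$ at $b \cdot z$ composed with the invertible diagonal map $\mathrm{diag}(z_1,\ldots,z_n)$. Since $b \cdot z \in (\mathbb{R}_+ \setminus \{0\})^n \subset (\mathbb{R}^\times)^n$ and $\tau$ is compact, strong non-degeneracy forces this Jacobian to have rank $l$ at every zero of $\boldsymbol{f}_\tau$, so $0 \in \mathbb{R}^l$ is a regular value of $\mathbf{h}$ and $M := \mathbf{h}^{-1}(0)$ is an analytic submanifold of codimension $l$. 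I would then apply Sard's lemma to the projection $\pi : M \to (\mathbb{R}_+ \setminus \{0\})^n$, $(b,z) \mapsto b$: when $Card(K) < l$, one has $\dim M < n$, so $\pi(M)$ has measure zero and for almost every $b$ the fibre $\pi^{-1}(b)$ is empty, which is exactly the compatibility condition in this case; when $Card(K) \geq l$, using $T_{(b,z)}M = \ker d\mathbf{h}(b,z)$ together with the surjectivity of $\partial_b \mathbf{h}$ established above, one checks that $b$ is a regular value of $\pi$ if and only if $\partial_z \mathbf{h}|_{T_z S_K}$ is surjective at every $z \in \pi^{-1}(b)$, i.e., the restriction $\boldsymbol{f}_\tau \circ T_b|_{S_K}$ has rank $l$ at every zero, which is precisely the compatibility condition.

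I expect the main technical obstacle to be this last translation step --- turning ``$b$ is a regular value of $\pi$'' into the operational compatibility condition of Definition \ref{def2} --- which requires care with the tangent-space identifications and a uniform treatment of the cases $Card(K) < l$ and $Card(K) \geq l$. Once this is in place, taking the finite union over all $n$-dimensional cones in $\mathcal{F}_{\boldsymbol{f}}$ and all admissible partitions yields a measure-zero exceptional set of $b$, completing the proof.
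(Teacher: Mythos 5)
Your proposal is correct in essence and follows the same strategic skeleton as the paper: reduce to a measure-zero statement for each cone and each partition $\{I,J,K\}$, form an incidence variety $M$ cut out by $\boldsymbol{f}_\tau$, and apply Sard's theorem to the projection $\pi\colon M\to(\mathbb{R}_+\setminus\{0\})^n$. The packaging, however, is cleaner than the paper's. You work multiplicatively with $\mathbf{h}(b,z)=\boldsymbol{f}_\tau(b\cdot z)$ and read off the submersion property from the chain-rule identity $\partial h_i/\partial b_j = z_j\,(\partial f_{i,\tau}/\partial x_j)(b\cdot z)$, then apply the standard abstract criterion (``$b$ regular for $\pi$ iff $\partial_z\mathbf{h}|_{T_zS_K}$ surjective at every fiber point''). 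The paper instead first linearizes via the logarithmic substitution $\psi=\mathcal{L}^{-1}\circ\theta$, exploiting the translation identity $(\boldsymbol{f}_\tau\circ T_{\psi(t)}\circ\psi)(z)=(\boldsymbol{f}_\tau\circ\psi)(t+z)$, and then invokes a bespoke technical lemma (Lemma~\ref{sard}) that explicitly parametrizes the critical values of $\pi$ via the sets $T_{l',k'}$; your version dispenses with both. Both arguments split into the cases $\operatorname{Card}(K)\geq l$ and $\operatorname{Card}(K)<l$ and use strong non-degeneracy in the identical way.

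One small caveat. The assertion that strong non-degeneracy ``forces this Jacobian to have rank $l$'' is only true when $l\leq n$; when $l>n$ the rank is $\min\{l,n\}=n<l$, so $0$ is not a regular value of $\mathbf{h}$ and the codimension-$l$ description of $M$ breaks down. Since $\operatorname{Card}(K)\leq n-2$, the case $\operatorname{Card}(K)\geq l$ automatically forces $l<n$, so the issue only arises for $\operatorname{Card}(K)<l$. There the conclusion $\dim M<n$ is still correct — via the diffeomorphism $(b,z)\mapsto(b\cdot z,z)$, $M\cong\bigl(\boldsymbol{f}_\tau^{-1}(0)\cap(\mathbb{R}^\times)^n\bigr)\times S_K$, the first factor has dimension $\max\{0,n-l\}$ by non-degeneracy, and $\max\{0,n-l\}+\operatorname{Card}(K)<n$ in both subcases — but you should state this directly rather than route it through the regular-value claim. (The paper's own treatment of the $\operatorname{Card}(K)<l$ case is terse at exactly this point as well.) This is a local repair, not a structural defect.
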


\begin{proof}
We use all the notation introduced in Section \ref{Section comp}\ and
Definition \ref{def2}. Set $\overline{\Delta}=\left\{  \sum_{i=1}^{n}
\lambda_{i}a_{i}\mid\lambda_{i}\in\mathbb{R}_{+}\right\}  $ a cone in
$\mathcal{F}_{\boldsymbol{f}}$, $\{I,J,K\}$ a partition of $\{1,\ldots,n\}$
with $I\neq\emptyset$ and $J\neq\emptyset$, and $\Delta_{K}=\{\sum_{i\in
K}\lambda_{i}a_{i}\mid\lambda_{i}\in\mathbb{R}_{+},\ \lambda_{i}>0\}$ for
$K\subset\{1,\ldots,n\}$. Assume $\tau=\cap_{i\in I}F(a_{i})$ is non-empty and compact.

\textbf{Case (}$Card(K)\geq l$\textbf{).} Note that in this case necessarily
$l<n$ (since $Card(K)<n$). We set $\psi:=\mathcal{L}^{-1} \circ\theta$ where
\[%
\begin{array}
[c]{ccccc}%
\mathbb{R}^{n} &
\begin{array}
[c]{c}%
\theta\\
\longrightarrow
\end{array}
& \mathbb{R}^{n} &
\begin{array}
[c]{c}%
\mathcal{L}^{-1}\\
\longrightarrow
\end{array}
& \left(  \mathbb{R}_{+}\smallsetminus\left\{  0\right\}  \right)  ^{n},
\end{array}
\]
$\theta(z)=\sum_{i=1}^{n}z_{i}a_{i}$, and the extension of $\mathcal{L}$ to
$(\mathbb{R}_{+}\setminus\{0\})^{n}$ is denoted again by $\mathcal{L}$. We
also set
\[
U=\{z\in\mathbb{R}^{n}\mid z_{j}=0\ \text{if}\ j\notin K\text{,}
\,\text{and}\,z_{j}>0\ \text{if}\ j\in K\},
\]
and
\[
N=\left\{
\begin{array}
[c]{l}%
b\in(\mathbb{R}_{+}\setminus\{0\})^{n}\mid(\boldsymbol{f}_{\tau}\circ
T_{b}\circ\psi)(z)=0\quad\text{and }\\
rank \left[  \frac{\partial(f_{i,\tau}\circ T_{b}\circ\psi)}{\partial z_{k}%
}(z)\right] _{\substack{1\leq i\leq l\\k\in K}} < \min\{l, Card(K)\} =
l\text{,}\ \text{for some }z\in U
\end{array}
\right\}  .
\]
To establish this case, we show that $N$ has measure zero. We may assume that
$K=\left\{  k^{\prime},k^{\prime}+1,\ldots,n\right\}  $, by renaming the
coordinates, with $k^{\prime}\leq n-l+1$.

Note that for any $b\in(\mathbb{R}_{+}\setminus\{0\})^{n}$ there exists a
$t\in\mathbb{R}^{n}$ such that $b=\psi(t)$, and
\begin{align*}
(\boldsymbol{f}_{\tau}\circ T_{b}\circ\psi)(z)  &  =\boldsymbol{f}_{\tau
}(T_{\psi(t)}\circ\psi)(z)=\boldsymbol{f}_{\tau}(\psi_{1}(t)\psi_{1}
(z),\ldots,\psi_{n}(t)\psi_{n}(z))\\
&  =\boldsymbol{f}_{\tau}(\psi_{1}(t+z),\ldots,\psi_{n}(t+z))=(\boldsymbol{f}%
_{\tau}\circ\psi)(t+z).
\end{align*}
Since $\boldsymbol{f}$ is strongly non-degenerate at the origin with respect
to $\Gamma(\boldsymbol{f})$ and $\psi$ is an analytic isomorphism over
$(\mathbb{R}^{\times})^{n}$, by taking $l^{\prime}=l$ in Lemma \ref{sard} we
conclude that
\[
N^{\prime}=\left\{
\begin{array}
[c]{l}%
t\in\mathbb{R}^{n}\mid(\boldsymbol{f}_{\tau}\circ\psi)(t+z)=0\,\quad
\text{and}\\
rank \left[  \frac{\partial(f_{i,\tau}\circ\psi)}{\partial z_{k} }(t+z)\right]
_{\substack{1\leq i\leq l\\k^{\prime}\leq k\leq n}}<l\text{, for some }z\in U
\end{array}
\right\}
\]
has measure zero. Since $\psi$ is an analytic isomorphism, $N$ has measure
zero too.

\textbf{Case (}$0\leq Card(K)\leq l-1$\textbf{).} We show that the set of the
$b$'s such that $(\boldsymbol{f}_{\tau}\circ T_{b})(z_{0})=0$ for $z_{0}\in
S_{K}$ has measure zero. This is equivalent to show that $Y_{z_{0}}=\left\{  t
\in\mathbb{R}^{n} \mid(\boldsymbol{f}_{\tau}\circ\psi)(t+z_{0})=0\right\}  $,
for a $z_{0}\in U$, has measure zero. Finally $Y_{z_{0}}$ has measure zero,
because it is a proper $\mathbb{R}$-analytic subset of $\mathbb{R}^{n}$.
\end{proof}

\section{Short list of candidate poles}

\subsection{Integral restricted to a sector}

Let $a_{1},\ldots,a_{n}\in\mathbb{N}^{n}$, determining linearly independent
vectors of $\mathbb{R}^{n}$, and $\overline{\Delta}_{\tau}=\{\lambda_{1}a_{1}
+\cdots+\lambda_{n}a_{n}\mid\lambda_{i}\in\mathbb{R}_{+}\}$ a closed cone in a
fixed simplicial fan $\mathcal{F}_{\boldsymbol{f}}$ subordinated to
$\Gamma(\boldsymbol{f})$, such that $F(a)=\tau$ for any $a\in\Delta_{\tau}$.
Set
\[
I_{\overline{\Delta}_{\tau}}(s):=I_{\overline{\Delta}_{\tau}}(s,\boldsymbol{f}
,\Phi)=\int_{\mathcal{L}^{-1}(\overline{\Delta}_{\tau})}\Phi(x)|\boldsymbol{f}%
(x)|_{\mathbb{R}}^{s}\,\left\vert dx\right\vert ,
\]
for $\operatorname{Re}(s)>0$, where $\Phi$ is a smooth function with compact
support, contained in a neighborhood of the origin.

\subsubsection{The change of variables for a cone}

Set $x\in(0,1]^{n}$ such that $\mathcal{L}(x)\in\overline{\Delta}_{\tau}$.
Then
\[
\mathcal{L}(x)=(-\ln x_{1},\ldots,-\ln x_{n})=\lambda_{1}a_{1}+\cdots
+\lambda_{n}a_{n}=(-\ln y_{1})a_{1}+\cdots+(-\ln y_{n})a_{n},
\]
for some unique $y=(y_{1},\ldots,y_{n})\in(0,1]^{n}$ satisfying $\mathcal{L}
(y)=\lambda$. Thus
\[
\ln x_{i}=\sum_{j=1}^{n}a_{ij}(\ln y_{j})=\sum_{j=1}^{n}\ln(y_{j}^{a_{ij}
})=\ln\left(  \prod_{j=1}^{n}y_{j}^{a_{ij}}\right)  ,
\]
i.e.
\[
x_{i}=\prod_{j=1}^{n}y_{j}^{a_{ij}}\ \text{and }\ x^{\alpha}=\prod_{j=1}
^{n}y_{j}^{\langle a_{j},\alpha\rangle},
\]
where $a_{j}=(a_{ij})_{1\leq i \leq n}$. From these considerations, we define
\[%
\begin{matrix}
w: & [0,1]^{n} & \rightarrow & [0,1]^{n} & \\
& y & \mapsto & x=w(y), & \text{where}\ x_{i}=\prod_{j=1}^{n}y_{j}^{a_{ij}}.
\end{matrix}
\]
By using $w$ as a change of variables in $I_{\overline{\Delta}_{\tau}}(s)$,
one gets
\begin{equation}
I_{\overline{\Delta}_{\tau}}(s)= |det[a_{1},\ldots,a_{n}]|_{\mathbb{R}}
\int\limits_{[0,1]^{n} } \phi(y) \left(  \prod\limits_{j=1}^{n}y_{j}%
^{\sigma\left(  a_{j}\right)  -1}\right)  |\boldsymbol{f}(w(y))|_{\mathbb{R}%
}^{s}\,\left\vert dy\right\vert ,\label{Iphi equal Idelta}%
\end{equation}
where $det[a_{1},\ldots,a_{n}]$ denotes the determinant of the matrix with
columns $a_{1},\ldots,a_{n}$ and $\phi(y):=\Phi(w(y))$.


\subsubsection{Description of $w^{-1}\{0\}$}

The set $w^{-1}\{0\}$ plays an important role in the considerations below. We
give here some properties of this set, proven in \cite[Lemma 5.1]{D-S} and
\cite[Lemma 5.2]{D-S}, that we will use later on.

\begin{lemma}
(1) Take $y\in\lbrack0,1]^{n}$. Set $I=\{i \in\{1,\dots, n\} \mid y_{i}=0\}$.
Assume that $I\neq\emptyset$ and that $\tau^{\prime}=\cap_{i\in I}F(a_{i}%
)\neq\emptyset$. Then
\[
y\in w^{-1}\{0\}\ \text{if and only if}\ \tau^{\prime}\ \text{is a compact
face of}\ \Gamma(\boldsymbol{f}).
\]

(2) If $V$ is a small enough neighborhood (resp. a neighborhood) of the origin
in $[0,1]^{n}$, then $w^{-1}(V)$ is a small enough neighborhood (resp.
neighborhood) of $w^{-1}\{0\}$ in $[0,1]^{n}$.
\end{lemma}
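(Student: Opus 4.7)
My plan is to treat the two parts separately: part (1) reduces to a combinatorial translation combined with the standard duality between faces of $\Gamma(\boldsymbol{f})$ and cones of the dual fan, while part (2) is a soft topological argument based on compactness.

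For (1), the first step is to unpack $w(y)=0$ explicitly. Writing $w(y)_i = \prod_{j=1}^{n} y_j^{a_{ij}}$ and using the standard convention $0^0 = 1$, we see that $w(y)_i = 0$ iff there exists some $j \in I$ with $a_{ij} > 0$. Hence $y \in w^{-1}\{0\}$ is equivalent to saying that the vector $b := \sum_{j\in I} a_j$ lies in $(\mathbb{R}_+\setminus\{0\})^n$, i.e.\ has all strictly positive coordinates. Next I would invoke the standard criterion: a non-empty face $\sigma$ of $\Gamma(\boldsymbol{f})$ is compact iff $\overline{\Delta}_\sigma$ contains a strictly positive vector. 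Indeed, if every $a \in \overline{\Delta}_\sigma$ has $a_k = 0$ for some fixed $k$, then $\sigma$ is invariant under translation by $e_k$ and hence unbounded; conversely, a strictly positive $a$ makes $\langle a, x\rangle \to \infty$ as $|x|\to\infty$ in $\mathbb{R}_+^n$, forcing $F(a)$ to be compact. Finally, since the $a_j$'s are generators of a simplicial cone in the fan subordinated to $\Gamma(\boldsymbol{f})$, one has $F\bigl(\sum_{j\in I}\lambda_j a_j\bigr) = \cap_{j \in I} F(a_j) = \tau'$ for any $\lambda_j > 0$, so the compactness of $\tau'$ is exactly equivalent to $b$ having all strictly positive coordinates. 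This closes the equivalence.

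For (2), I would use that $w : [0,1]^n \to [0,1]^n$ is a continuous map between compact Hausdorff spaces, hence closed. The non-trivial direction says that for any neighborhood $W$ of $w^{-1}\{0\}$ there exists a neighborhood $V$ of $0$ with $w^{-1}(V) \subseteq W$. This follows by taking $V := [0,1]^n \setminus w([0,1]^n \setminus W)$: the complement $[0,1]^n \setminus W$ is compact, its image under $w$ is compact and avoids $0$, so $V$ is an open neighborhood of $0$, and by construction $w^{-1}(V) \cap ([0,1]^n \setminus W) = \emptyset$. The converse (``$V$ a neighborhood of $0$ implies $w^{-1}(V)$ a neighborhood of $w^{-1}\{0\}$'') is immediate from continuity.

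The main substantive step is part (1), specifically the verification of the dual compactness criterion for faces of $\Gamma(\boldsymbol{f})$ and the identification of $\sum_{j\in I}\mathbb{R}_+ a_j$ with a sub-cone of $\overline{\Delta}_{\tau'}$; once these are in place, the whole lemma follows by direct bookkeeping.
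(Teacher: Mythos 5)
The paper does not actually prove this lemma; it is quoted directly from Denef--Sargos (\cite[Lemmes 5.1, 5.2]{D-S}), so there is no in-paper proof to compare against. Your self-contained argument is correct, and is essentially the natural one.

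For part (1), the translation is right: since $y_j=0$ exactly for $j\in I$, one has $w(y)_i=0$ iff $a_{ij}>0$ for some $j\in I$, i.e.\ iff the $i$-th coordinate of $b:=\sum_{j\in I}a_j$ is positive; so $w(y)=0$ iff $b$ lies in the open orthant. Your compactness criterion is also correct, and since $\tau'=\cap_{i\in I}F(a_i)\neq\emptyset$ one has $F(b)=\cap_{i\in I}F(a_i)=\tau'$, so $b\in\Delta_{\tau'}$ and the equivalence ``$b$ strictly positive $\Leftrightarrow$ $\tau'$ compact'' closes the loop. One small point worth making explicit: when you write ``if every $a\in\overline{\Delta}_\sigma$ has $a_k=0$ for some fixed $k$,'' the passage from ``every $a$ has some vanishing coordinate'' to ``there is a single $k$ that works for all $a$'' is not automatic, but it does follow from convexity of $\overline{\Delta}_\sigma$: if for each $k$ some $a^{(k)}\in\overline{\Delta}_\sigma$ had $a^{(k)}_k>0$, then $\sum_k a^{(k)}$ would be a strictly positive vector in $\overline{\Delta}_\sigma$. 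Also note that in your application you do not even need $\overline{\Delta}_{\tau'}$: since $b\in\Delta_{\tau'}$ itself and every member of $\Delta_{\tau'}$ has $F(\cdot)=\tau'$, compactness of $\tau'$ directly forces every vector of $\Delta_{\tau'}$ (hence $b$) to be strictly positive, avoiding the boundary subtlety altogether.

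Part (2) is the standard closed-map/compactness argument and is fine as written.
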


\subsubsection{Properties of $\boldsymbol{f}\circ w$}

Writing $f_{i}=\sum_{m} c_{m,i} x^{m}$ we note for any $i=1,\ldots,l$ that
\begin{equation}
f_{i}(w(y))=\sum_{m}c_{m,i}(w(y))^{m}=\sum_{m}c_{m,i}\prod_{j=1}^{n}%
y_{j}^{\langle a_{j},m\rangle},\label{f(w)}%
\end{equation}
what we can write as
\begin{equation}
f_{i}(w(y))=\left(  \prod_{j=1}^{n}y_{j}^{d(a_{j})}\right)  {f}_{i}^{*}(y)
,\label{efe i de w}%
\end{equation}
where $d(a_{j})=\min_{x\in\Gamma\left(  \boldsymbol{f}\right)  }\langle
a_{j},x\rangle.$ If $supp(f_{i})\cap\tau\neq\emptyset$, then the minimum of
all $\langle a_{j},m\rangle$ is attained at $\tau$ and ${f}_{i}^{*}(0)\neq0$.
If $supp(f_{i})\cap\tau=\emptyset$, then ${f}_{i}^{*}(0)=0$.

From now on we pick a sufficiently small neighborhood $W$ of $w^{-1}\{0\}$ on
which the series in the right hand side of (\ref{f(w)}) converges for every
$i$. Then for every $i$ the restriction of this series to $W\cap[0,1]^{n}$
agrees with $f_{i} \circ w$.

\subsection{Meromorphic Continuation of $I_{\overline{\Delta}_{\tau}}(s)$}

\begin{theorem}
\label{Theorem4}  Let $\boldsymbol{f}:U (\subset\mathbb{R}^{n})
\longrightarrow\mathbb{R}^{l}$, with $\boldsymbol{f}\left(  0\right)  =0$, be
an analytic mapping, strongly non-degenerate at the origin with respect to
$\Gamma\left(  \boldsymbol{f}\right) $. Assume that $\tau=\cap_{i=1}%
^{n}F(a_{i})$ is a $0$-dimensional face of $\Gamma(\boldsymbol{f})$, and that
$\boldsymbol{f}$ is compatible with $\mathcal{L}^{-1}(\overline{\Delta}_{\tau
})$. There exists a neighborhood $V (\subset U)$ of the origin such that, if
$\Phi$ is a smooth function with support contained in $V$, the following
assertions hold.

\noindent(1) $I_{\overline{\Delta}_{\tau}}(s)$ is convergent and defines a
holomorphic function of $s$ on $\operatorname{Re}(s)>\max\{-l,-\frac
{\sigma\left(  a_{1}\right)  }{d(a_{1})},\dots,-\frac{\sigma\left(
a_{n}\right)  }{d(a_{n} )}\}$,

\noindent(2) $I_{\overline{\Delta}_{\tau}}(s)$ admits a meromorphic extension
to the complex plane with poles of order at most $n$. Furthermore, the poles
belong to
\[
\left(  \bigcup_{1\leq i\leq n}\left(  -\frac{\sigma\left(  a_{i}\right)
+\mathbb{N}}{d(a_{i})}\right)  \right)  \cup(-\left(  l+\mathbb{N}\right)  ).
\]

\noindent(3) Let $\rho$ be a positive integer and let $s_{0}$ be a candidate
pole of $I_{\overline{\Delta}_{\tau}}(s)$, with $s_{0}\notin-\left(
l+\mathbb{N}\right) $ (resp. $s_{0}\in-\left(  l+\mathbb{N}\right) $). A
necessary condition for $s_{0}$ to be a pole of $I_{\overline{\Delta}_{\tau}%
}(s)$ of order $\rho$, is that
\[
Card\left\{  i\mid s_{0}\in\left(  -\frac{\sigma\left(  a_{i}\right)
+\mathbb{N}}{d(a_{i})}\right)  \right\}  \geq\rho\ \text{(resp.}\ \geq
\rho-1\text{)} .
\]

\end{theorem}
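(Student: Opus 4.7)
The plan is to start from the change of variables formula (\ref{Iphi equal Idelta}) and the factorization (\ref{efe i de w}) of $f_i\circ w$, which together rewrite
\[
I_{\overline{\Delta}_{\tau}}(s)=|{\det}[a_{1},\ldots,a_{n}]|\int_{[0,1]^{n}}\phi(y)\prod_{j=1}^{n}y_{j}^{d(a_{j})s+\sigma(a_{j})-1}|\boldsymbol{f}^{*}(y)|_{\mathbb{R}}^{s}|dy|,
\]
with $\phi(y)=\Phi(w(y))$ and $\boldsymbol{f}^{*}=(f_{1}^{*},\ldots,f_{l}^{*})$. Since $\tau=\{v\}$ is a vertex of $\Gamma(\boldsymbol{f})=\Gamma(\cup\mathrm{supp}\,f_{i})$, at least one coefficient $c_{v,i}$ is nonzero, so $\boldsymbol{f}^{*}(0)\neq 0$. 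I then introduce a smooth partition of unity on a neighborhood of $\mathrm{supp}\,\phi$ in $[0,1]^{n}$ to localize around each $y_{0}\in[0,1]^{n}$, setting $I_{0}:=\{i\colon y_{0,i}=0\}$, $K_{0}:=\{k\colon 0<y_{0,k}<1\}$, and $J_{0}:=\{j\colon y_{0,j}=1\}$.

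The easy case is $\boldsymbol{f}^{*}(y_{0})\neq 0$ (which includes $y_{0}=0$). There $|\boldsymbol{f}^{*}(y)|_{\mathbb{R}}^{s}=\exp\bigl(\tfrac{s}{2}\ln\sum(f_{i}^{*})^{2}\bigr)$ is $\mathbb{R}$-analytic in $y$ and entire in $s$ on a neighborhood of $y_{0}$, so the local integral is a monomial integral in the $|I_{0}|$ variables $y_{i}$, $i\in I_{0}$, with the holomorphic factor absorbed into the smooth amplitude. The standard monomial integral result, a special case of Lemma \ref{Lema1} in which no spherical variable is needed, yields a meromorphic continuation with poles in $\bigcup_{i\in I_{0}}\bigl(-\frac{\sigma(a_{i})+\mathbb{N}}{d(a_{i})}\bigr)$ and the order bound of (3); no $-(l+\mathbb{N})$ pole arises here.

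The critical case is $\boldsymbol{f}^{*}(y_{0})=0$, which forces $I_{0}\subsetneq\{1,\ldots,n\}$. A direct expansion of the power series of $f_{i}^{*}$ shows that, setting $z:=w(y_{0}')$ where $y_{0}'$ has $y_{0,i}'=1$ for $i\in I_{0}$ and agrees with $y_{0}$ elsewhere, one has $z\in S_{K_{0}}\subset(\mathbb{R}^{\times})^{n}$ and
\[
f_{i}^{*}(y_{0})=\Bigl(\prod_{k\in K_{0}}y_{0,k}^{-d(a_{k})}\Bigr)f_{i,\tau_{I_{0}}}(z), \qquad \tau_{I_{0}}:=\bigcap_{i\in I_{0}}F(a_{i}).
\]
Because $\tau_{I_{0}}\supseteq\tau$, $\tau_{I_{0}}$ is a compact face of $\Gamma(\boldsymbol{f})$, and $\boldsymbol{f}_{\tau_{I_{0}}}(z)=0$. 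When $J_{0}\neq\emptyset$, I invoke the compatibility of $\boldsymbol{f}$ with $\mathcal{L}^{-1}(\overline{\Delta}_{\tau})$ applied to the partition $\{I_{0},J_{0},K_{0}\}$; when $J_{0}=\emptyset$ that partition has no $J$, but then $z$ lies in $(\mathbb{R}^{\times})^{n}$ and the strong non-degeneracy of $\boldsymbol{f}$ at the compact face $\tau_{I_{0}}$ applies directly. In either subcase I conclude that $|K_{0}|\geq l$ and that the Jacobian of $\boldsymbol{f}_{\tau_{I_{0}}}$ restricted to $S_{K_{0}}$ has rank $l$ at $z$, which by the chain rule transfers to rank $l$ of $[\partial f_{i}^{*}/\partial y_{j}(y_{0})]_{1\leq i\leq l,\,j\notin I_{0}}$.

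The implicit function theorem then produces a local $\mathbb{R}$-analytic change of variables $y\mapsto(y_{I_{0}},\tilde y)$ that fixes $y_{i}$ for $i\in I_{0}$ and straightens $l$ of the $f_{i}^{*}$ into coordinate functions $\tilde y_{k_{1}},\ldots,\tilde y_{k_{l}}$, making $|\boldsymbol{f}^{*}|_{\mathbb{R}}^{s}=\bigl(\sum_{j=1}^{l}\tilde y_{k_{j}}^{2}\bigr)^{s/2}$ up to a non-vanishing $\mathbb{R}$-analytic factor. The local integral then fits Corollary \ref{Cor1} with $r=|I_{0}|$, the given $l$, and $n-|I_{0}|-l$ remaining free variables; Corollary \ref{Cor1} supplies the meromorphic continuation, the candidate pole set, and the order bound demanded by (1), (2), (3), after summation over the partition of unity. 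The main obstacle is this hard case: both the bookkeeping identifying $\boldsymbol{f}^{*}(y_{0})=0$ with $\boldsymbol{f}_{\tau_{I_{0}}}(z)=0$ on $S_{K_{0}}$, and cleanly splitting according to whether $J_{0}$ is empty in order to invoke the appropriate hypothesis (compatibility versus strong non-degeneracy) that delivers the rank statement feeding the implicit function theorem.
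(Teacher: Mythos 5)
Your proposal follows essentially the same route as the paper: the same change of variables $w$, the same partition of unity, the same split into an easy case (amplitude nonvanishing, reduce to Lemma~\ref{Lema1}) and a hard case (straightening via the implicit function theorem, reduce to Corollary~\ref{Cor1}), and the same use of strong non-degeneracy when $J_0=\emptyset$ and compatibility when $J_0\neq\emptyset$. The identity relating $f_i^*(y_0)$ to $f_{i,\tau_{I_0}}(z)$ is correct, and your conclusion that $|K_0|\geq l$ together with the rank condition is the right one. So the architecture is sound and matches the paper's Cases 1, 2.1, 2.2, 2.3.

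However, one step is wrong as stated. You claim that $\tau_{I_0}\supseteq\tau$ forces $\tau_{I_0}$ to be a \emph{compact} face. This is false: containing a vertex does not make a face compact. For example, with $\Gamma=\Gamma(x_1+x_2^2)$, $\tau=\{(1,0)\}$ and $a_2=(0,1)$, one has $\tau_{\{2\}}=F(a_2)=\{(x,0)\mid x\geq 1\}$, which contains $\tau$ but is unbounded. Compactness of $\tau_{I_0}$ is needed for both the strong non-degeneracy hypothesis and the compatibility hypothesis to apply, so without it the hard-case analysis breaks. The correct justification, which the paper uses implicitly through the quoted Lemma of \cite{D-S} (the un-numbered lemma in the subsection on $w^{-1}\{0\}$), is that $\tau_{I_0}$ is compact \emph{if and only if} $y_0\in w^{-1}\{0\}$; one then restricts attention to $y_0\in w^{-1}\{0\}$ by shrinking $V$ so that $\operatorname{supp}\phi\subset w^{-1}(V)$ is a small neighborhood of $w^{-1}\{0\}$ and arguing that the points with $\tau_{I_0}$ noncompact can be absorbed into the easy case. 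You should replace the incorrect implication with this argument; the rest of your outline then goes through.
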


\begin{proof}
We first prove (1) and (2). By (\ref{Iphi equal Idelta}) it is sufficient to
prove the result for
\begin{equation}
I_{\Omega}(s)=\int\limits_{[0,1]^{n}} \Omega(y) \left(  \prod\limits_{i=1}^{n}
y_{j}^{\sigma\left(  a_{j}\right)  -1}\right)  |\boldsymbol{f}
(w(y))|_{\mathbb{R}}^{s}\,\left\vert dy\right\vert ,\label{def i phi}%
\end{equation}
where $\Omega$ is a smooth function. By using a sufficiently fine partition of
the unity, one can express $I_{\Omega}(s)$ as a finite sum of analogous
integrals $I_{\Omega_{p}}(s)$, where $\Omega_{p}$ is a smooth function with
support contained in a small ball around a point $p$ belonging to the support
of $\Omega$. The relevant points $p$ to consider belong to $w^{-1}\{0\}$. In
the sequel we may and will assume that $supp(\Omega_{p})$ is as small as
necessary for the arguments that follow. Several cases occur.

\smallskip\textbf{Case 1 }($p$ is the origin of $\mathbb{R}^{n}$).\newline
Assume that $\Omega_{0}$ is a smooth function containing the origin.
There exists at least one index $i_{0}$ such that $supp(f_{i_{0}})\cap\tau
\neq\emptyset$. Hence, with the notation of (\ref{efe i de w}), there exists a
small neighborhood of the origin $V_{0,\tau}$ such that $\{0\}\subset
supp(\Omega_{0})\subset V_{0,\tau}$, and ${f}^{*}_{i_{0}}(y)\neq0$ for any
$y\in V_{0,\tau}$. Consequently, we have that $\sum_{i=1}^{l}({f}_{i}%
^{*}(y))^{2}>0$ on $V_{0,\tau}$, and then by Lemma \ref{Lema1} the integral
\begin{equation}
I_{\Omega_{0}}(s)=\int\limits_{[0,1]^{n}} \Omega_{0}(y) \left(  \prod
_{j=1}^{n} y_{j}^{sd(a_{j})+\sigma\left(  a_{j}\right)  -1}\right)  \left(
\sum_{i=1}^{l}({f}_{i}^{*}(y))^{2}\right)  ^{s/2}\,\left\vert dy\right\vert
\label{I1}%
\end{equation}
has a meromorphic continuation to $\mathbb{C}$, with poles (of order at most
$n$) belonging to
\[
\bigcup_{1\leq i\leq n}\left(  -\frac{\sigma\left(  a_{i}\right)  +\mathbb{N}
}{d(a_{i})}\right)  .
\]

\textbf{Case 2 }($p$ has exactly $r$ coordinates equal to zero, with $1\leq
r\leq n-1$).\newline After renaming the variables, we may suppose that the
first $r$ coordinates are zero, i.e. $p$ is the form $p=(0,\ldots,0,p_{r+1}
,\ldots,p_{n})$. Let $\tau^{\prime}$ be the first meet locus of the cone
$\Delta_{\tau^{\prime}}$, spanned by $a_{1},\ldots,a_{r}$; it is a compact
face of $\Gamma(\boldsymbol{f})$. Note that, similarly as in \cite[Chap. II,
\S \ 8, \ Lemme 9]{AVG},
\begin{equation}
f_{i}(w(y))=\prod_{j=1}^{r}y_{j}^{d(a_{j})}\left(  \widetilde{f}_{i}
(y_{r+1},\ldots,y_{n})+O_{i}(y_{1},\dots,y_{n})\right)  ,\label{efe i}%
\end{equation}
where the $\widetilde{f}_{i}$ are polynomials in $y_{r+1},\ldots,y_{n}$ and
the $O_{i}(y_{1},\ldots,y_{n})$ are analytic functions in $y_{1},\ldots,y_{n}$
but belonging to the ideal generated by $y_{1},\ldots,y_{r}$. Here
$\widetilde{f}_{i}$ is identically zero if and only if $supp(f_{i})\cap
\tau^{\prime}=\emptyset$. Furthermore,
\begin{equation}
f_{i,\tau^{\prime}}(w(y))=\prod_{j=1}^{r}y_{j}^{d(a_{j})}\left(  \widetilde
{f}_{i}(y_{r+1},\ldots,y_{n})\right)  .\label{efe i tau}%
\end{equation}
For the sequel we redefine $f^{*}_{i}(y)$ as
\[
f^{*}_{i}(y) = \widetilde{f}_{i} (y_{r+1},\ldots,y_{n})+O_{i}(y_{1}%
,\dots,y_{n}).
\]
Set $\widetilde{p}=(p_{r+1},\ldots,p_{n})\in(\mathbb{R}^{\times})^{n-r}$. To
accomplish the proof of Case 2, we need to study the following three subcases.

\smallskip\noindent\textbf{Subcase 2.1} (There exists an index $i$ such that
$\widetilde{f}_{i}(\widetilde{p})\neq0$). Then there exists a neighborhood
$V_{p,\tau^{\prime}}$ of $p=(0,\ldots,0,p_{r+1},\ldots,p_{n}),$ such that
${f}^{*}_{i}(y)\neq0$ for any $y\in V_{p,\tau^{\prime}}$ and such that
$supp(\Omega_{p}) \subset V_{p,\tau^{\prime}}$. Hence
\[
g(y_{1},\ldots,y_{n}):=\left(  \sum_{i=1}^{l}({f}^{*}_{i}(y_{1} ,\ldots
,y_{n}))^{2}\right)  ^{1/2}>0,
\]
for any $y\in V_{p,\tau^{\prime}}$, and, by Lemma \ref{Lema1},
\begin{equation}
I_{\Omega_{p}}(s)=\int\limits_{[0,1]^{n}} \Omega_{p}(y) \left(  \prod
_{j=1}^{r} y_{j}^{sd(a_{j})+\sigma\left(  a_{j}\right)  -1}\right)
g(y)^{s}\,\left\vert dy\right\vert \label{I2}%
\end{equation}
has a meromorphic continuation to the whole complex plain with poles contained
in
\begin{equation}
\bigcup_{1\leq i\leq r}\left(  -\frac{\sigma\left(  a_{j}\right)  +\mathbb{N}
}{d(a_{j})}\right)  .\label{poles r}%
\end{equation}
\textbf{Subcase 2.2} ($\widetilde{f}_{i}(\widetilde{p})=0$ for $i=1,\ldots,l$
and $\widetilde{p}\in(0,1)^{n-r}$). By the non-degeneracy condition, the fact
that $w$ is an analytic isomorphism on $(\mathbb{R}_{+} \setminus\{0\})^{n}$
and (\ref{efe i tau}), one gets for any $y=\left(  \left(  y_{1},\ldots
,y_{r}\right)  ,\widetilde{y}\right)  \in(\mathbb{R}_{+} \setminus\{0\})^{r}
\times(\mathbb{R}_{+} \setminus\{0\})^{n-r}\cap\{y\mid\widetilde{f}_{1}%
(\tilde{y})=\cdots=\widetilde{f}_{l}(\tilde{y})=0\}$ that
\[
rank \left[  \dfrac{\partial f_{i,\tau^{\prime}}}{\partial y_{j}
}(w(y))\right]  =
\]
\[
rank \left[
\begin{matrix}
0 & \dots & 0 & \prod\limits_{j=1}^{r}y_{j}^{d(a_{j})}\frac{\partial
\widetilde{f}_{1}}{\partial y_{r+1}}(\tilde{y}) & \dots & \prod\limits_{j=1}
^{r}y_{j}^{d(a_{j})}\frac{\partial\widetilde{f}_{1}}{\partial y_{n}}(\tilde
{y})\\
\vdots &  & \vdots & \vdots &  & \vdots\\
0 & \dots & 0 & \prod\limits_{j=1}^{r}y_{j}^{d(a_{j})}\frac{\partial
\widetilde{f}_{l}}{\partial y_{r+1}}(\tilde{y}) & \dots & \prod\limits_{j=1}
^{r}y_{j}^{d(a_{j})}\frac{\partial\widetilde{f}_{l}}{\partial y_{n}}(\tilde
{y})
\end{matrix}
\right]  =l.
\]
Now this implies for $\tilde{y}=(y_{r+1},\ldots,y_{n})\in(\mathbb{R}_{+}
\setminus\{0\})^{n-r} \cap\{y\mid\widetilde{f}_{1}(\tilde{y})=\cdots
=\widetilde{f}_{l}(\tilde{y})=0\}$ that
\[
rank \left[
\begin{matrix}
\frac{\partial\widetilde{f}_{1}}{\partial y_{r+1}} & \dots & \frac
{\partial\widetilde{f}_{1}}{\partial y_{n}}\\
\vdots & \dots & \vdots\\
\frac{\partial\widetilde{f}_{l}}{\partial y_{r+1}} & \dots & \frac
{\partial\widetilde{f}_{l}}{\partial y_{n}}%
\end{matrix}
\right]  (\widetilde{y}) =l .
\]
Then necessarily $l\leq n-r$, all $\widetilde{f}_{i}$ are non-zero
polynomials, and $rank \left[  \frac{\partial\widetilde{f}_{i}}{\partial
y_{j}}(\widetilde{p})\right]  =l$.

Since $\left[  \frac{\partial\widetilde{f}_{i}}{\partial y_{j}}(\widetilde
{p})\right]  =\left[  \frac{\partial(\widetilde{f}_{i}+O_{i})}{\partial y_{j}
}(p)\right]  $, cf. (\ref{efe i}), this last matrix having rank $l$ implies
that we can choose new coordinates $y^{\prime}=(y_{1},\ldots,y_{r}%
,y_{r+1}^{\prime},\ldots,y_{r+l}^{\prime},y_{r+l+1},\ldots,y_{n})$ in a
neighborhood $V_{p}$ of $p$, with $supp(\Omega_{p})\subset V_{p}$, such that
\[
f_{i}(\omega(y^{\prime}))= \prod_{j=1}^{r}(y_{j}^{\prime})^{d(a_{j})}
y^{\prime}_{r+i}
\]
for $i=1,\dots,l$ and hence
\[
|\boldsymbol{f}(w(y))|_{\mathbb{R}}^{s}=\left(  \sum_{i=1}^{l}f_{i}
^{2}(w(y^{\prime}))\right)  ^{s/2}=\prod_{j=1}^{r}(y_{j}^{\prime})^{sd(a_{j}
)}\left(  \sum_{i=1}^{l}(y_{r+i}^{\prime})^{2}\right)  ^{s/2}.
\]
Consequently
\begin{equation}
I_{\Omega_{p}}(s)=\int\widetilde{\Omega}_{p}(y^{\prime}) \prod_{j=1}^{r}%
(y_{j}^{\prime})^{sd(a_{j})+\sigma\left(  a_{j}\right)  -1}\left(  \sum
_{i=1}^{l}(y_{r+i}^{\prime})^{2}\right)  ^{s/2}\,|dy^{\prime}|,\label{I3}%
\end{equation}
where the integration is performed over $[0,1]^{r}\times U\times
\lbrack0,1]^{n-l-r}$, where $U$ is a small neighborhood of the origin in
$\mathbb{R}^{l}$. By applying Corollary \ref{Cor1}, we get a meromorphic
continuation to the whole complex plane for (\ref{I3}), with poles belonging
to
\[
\left(  \bigcup_{1\leq i\leq r}\left(  -\frac{\sigma\left(  a_{j}\right)
+\mathbb{N}}{d(a_{j})}\right)  \right)  \cup(-\left(  l+\mathbb{N}\right)  ).
\]

\noindent\textbf{Subcase 2.3} ($\widetilde{f}_{i}(\widetilde{p})=0$ for
$i=1,\ldots,l$ and $\widetilde{p}\in(0,1]^{n-r}$ with at least one coordinate
equal to $1$). By renaming the variables we may assume that $\widetilde{p}$
has the form $\widetilde{p}=(p_{r+1},\ldots,p_{r+t},1,\ldots,1)$, with
$p_{r+i}\in(0,1)$ for each $i=1,\ldots,t$.

\medskip\noindent\textbf{Claim.} \textsl{(1) If $0\leq t\leq l-1$, there
exists an index $i_{0}$ such that ${f}^{*}_{i_{0}}(p)\neq0$. }

\textsl{(2) If $t\geq l$, then
\[
rank \left[  p_{j}(1-p_{j})\frac{\partial{f}^{*}_{i} }{\partial y_{j}%
}(p)\right]  _{\substack{1\leq i\leq l\\1\leq j\leq n}}=l.
\]
}

\smallskip Assuming the claim, we conclude when $0\leq t\leq l-1$, by a
similar argument as the one given for Case 1, that $I_{\Omega_{p}}(s)$ has a
meromorphic continuation to $\mathbb{C}$ with poles contained in the set
(\ref{poles r}). When $t\geq l$, we can choose new coordinates $y^{\prime
}=(y_{1},\ldots,y_{r},y_{r+1}^{\prime},\ldots,y_{r+l}^{\prime},y_{r+l+1}
,\ldots,y_{n})$ in a neighborhood $V_{p}$ of $p$ such that $y_{r+i}^{\prime
}=f^{*}_{i}(y)$ for $i=1,\ldots,l$. In this coordinate system $I_{\Omega_{p}%
}(s)$ has the form (\ref{I3}), and thus meromorphic continuation is obtained
as in Subcase 2.2.

\medskip\noindent\textbf{Proof of the claim.} Set $I=\{i\mid\,p_{i}%
=0\}=\{1,\ldots,r\},\,J=\{i\mid\,p_{i}=1\}=\{r+t+1,\ldots,n\}$ and
$K=\{i\mid\,0<p_{i}<1\}=\{r+1,\ldots,r+t\}$. Set also
\[
A=\{y\in\lbrack0,1]^{n}\mid y_{j}=0\ \text{if}\ j\in I\text{, }\,y_{j}
=1\ \text{if}\ j\in J\text{, and}\,0<y_{j}<1\ \text{if}\ j\in K\},
\]
and
\[
\widehat{A}=\{z\in\lbrack0,1]^{n}\mid z_{j}=1\ \text{if}\ j\in I\cup J\text{,}
\ \text{and}\,0<z_{j}<1 \ \text{if}\ j\in K\}.
\]
Given $y\in A$, we denote by $\hat{y}$ the element of $\widehat{A}$ such that
$y_{k}=\hat{y}_{k}$ for $k\in K$. Set $\Delta_{K}=\{\sum_{j\in K}\lambda
_{j}a_{j}\mid\lambda_{j}\in\mathbb{R}_{+}\smallsetminus\left\{  0\right\}  \}$
and $S_{K}=\mathcal{L}^{-1}(\Delta_{K})$ as before.

If $0\leq t= Card(K)\leq l-1$, there exists an index $i_{0}$, such that
$\widetilde{f}_{i_{0}}(p)\neq0$, by the compatibility condition and
(\ref{efe i tau}). If $t=Card(K)\geq l$, the compatibility condition asserts,
for
\[
\widehat{y}= (1,\ldots,1,y_{r+1},\ldots,y_{r+t},1,\ldots,1)
\]
\[
\in\{\widehat{y} \in\widehat{A} \mid(f_{1,\tau^{\prime} }\circ w)(\widehat
{y})=\cdots=(f_{l,\tau^{\prime}}\circ w)(\widehat{y})=0\},
\]
that
\[
rank \left[  \frac{\partial(f_{i,\tau^{\prime}}\circ w)}{\partial y_{j}%
}(\widehat{y})\right]  _{\substack{1\leq i\leq l\\1\leq j\leq n}} = rank
\left[  \frac{\partial(f_{i,\tau^{\prime}}\circ w)}{\partial y_{j}}%
(\widehat{y})\right]  _{\substack{1\leq i\leq l\\j \in K}}  =l.
\]
Note for the first equality that the restriction of $\omega$ to $\widehat{A}$
is a parametrization of $S_{K}$. By (\ref{efe i}) and (\ref{efe i tau}) this
condition on $y$ is equivalent, for
\[
y=(0,\ldots,0,y_{r+1},\ldots,y_{r+t},1,\ldots,1)\in\{y \in A \mid f^{*}%
_{1}(y)=\cdots=f^{*}_{l}(y)=0\} ,
\]
to
\[
rank \left[  \tfrac{\partial f^{*}_{i}}{\partial y_{j}}(y)\right]
_{\substack{1\leq i\leq l\\j \in K}} = l ,
\]
what can be rewritten as

%

\[
l= rank \left[  y_{j}(1-y_{j})\frac{\partial{f}^{*}_{i} }{\partial y_{j}%
}(y)\right] _{\substack{1\leq i\leq l\\1\leq j\leq n}}=
\]
\[
rank {\footnotesize \left[
\begin{matrix}
0 & \dots & 0 & y_{r+1}(1-y_{r+1})\frac{\partial{f}^{*}_{1}}{\partial y_{r+1}%
}({y}) & \dots & y_{r+t}(1-y_{r+t})\frac{\partial{f}^{*}_{1}}{\partial
y_{r+t}}({y}) & 0 & \dots & 0\\
\vdots &  & \vdots & \vdots &  & \vdots & \vdots &  & \vdots\\
0 & \dots & 0 & y_{r+1}(1-y_{r+1})\frac{\partial{f}^{*}_{l}}{\partial y_{r+1}%
}({y}) & \dots & y_{r+t}(1-y_{r+t})\frac{\partial{f}^{*}_{l}}{\partial
y_{r+t}}({y}) & 0 & \dots & 0
\end{matrix}
\right]  .}
\]
This finishes the proof of the claim.

\smallskip The third part of the theorem follow from (\ref{I1}), (\ref{I2})
and (\ref{I3}) by applying Lemma \ref{Lema1} and Corollary \ref{Cor1}.
\end{proof}

\medskip Finally we state in the following theorem the meromorphic
continuation of
\[
I(s) = I(s,\boldsymbol{f},\Phi)=\int_{\mathbb{R}_{+}^{n} }\Phi(x)\left\vert
\boldsymbol{f}(x)\right\vert _{\mathbb{R}}^{s}\,\left\vert dx\right\vert .
\]
Recall the notation
\[
\mathcal{P}(\xi) =\left\{  -\frac{\sigma\left(  \xi\right)  +k}{d\left(
\xi\right)  } \mid k\in\mathbb{N} \right\}  .
\]
for $\xi\in\mathfrak{D} (\Gamma\left(  \boldsymbol{f}\right)  )$.

\begin{theorem}
\label{Theorem5} Let $\boldsymbol{f}:U (\subset\mathbb{R}^{n}) \longrightarrow
\mathbb{R}^{l}$, with $\boldsymbol{f}\left(  0\right)  =0$, be an analytic
mapping, strongly non-degenerate at the origin with respect to $\Gamma\left(
\boldsymbol{f}\right) $. There exists a neighborhood $V (\subset U)$ of the
origin such that, if $\Phi$ is a smooth function with support contained in
$V$, the following assertions hold:

\noindent(1) $I(s)$ converges and defines a holomorphic function on
$\operatorname{Re}(s)>\max\left\{  -\gamma_{0}\left(  \boldsymbol{f}\right)
,-l\right\}  $, with
\[
\gamma_{0}\left(  \boldsymbol{f}\right)  =\min_{\xi\in\mathfrak{D}
(\Gamma\left(  \boldsymbol{f}\right)  )}\left\{  \frac{\sigma\left(
\xi\right)  }{d\left(  \xi\right)  }\right\}  ,
\]

\noindent(2) $I(s)$ admits a meromorphic continuation to the whole complex
plane with poles of order at most $n$, and the poles belong to
\[
\left(  \bigcup_{\xi\in\mathfrak{D} (\Gamma(\boldsymbol{f}) )} \mathcal{P}
(\xi) \right)  \cup(-\left(  l+\mathbb{N}\right)  ).
\]

\noindent(3) Let $\rho$ be a positive integer and let $s_{0}$ be a candidate
pole of $I(s)$, with $s_{0}\notin-\left(  l+\mathbb{N}\right) $ (resp.
$s_{0}\in-\left(  l+\mathbb{N}\right)  $). A necessary condition for $s_{0}$
to be a pole of $I(s)$ or order $\rho$, is that there exists a face $\tau$ of
$\Gamma\left(  \boldsymbol{f}\right) $, of codimension $\rho$ (resp. $\rho
-1$), such that $s_{0} \in\mathcal{P}(\xi)$ for all facets of $\Gamma\left(
\boldsymbol{f}\right) $ containing $\tau$.
\end{theorem}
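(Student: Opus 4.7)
My approach is to reduce Theorem \ref{Theorem5} to the single-cone statement of Theorem \ref{Theorem4} via the decoupage associated to a simplicial fan subordinated to $\Gamma(\boldsymbol{f})$, and then to eliminate the spurious candidate poles coming from extra rays $\Lambda_{\boldsymbol{f}}$ by a cancellation argument in the spirit of Denef--Sargos.

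First, by Lemma \ref{compatibility}, after composing $\boldsymbol{f}$ with a suitable homothety $T_{b}$ (for $b$ in a set of full measure) I may assume $\boldsymbol{f}$ is compatible with a fixed simplicial fan $\mathcal{F}_{\boldsymbol{f}}$; this change of variables only rescales $I(s)$ by a constant and replaces $\Phi$ by another $C_{0}^{\infty}$ function supported near the origin. Let $\overline{\Delta}_{\tau_{1}},\dots,\overline{\Delta}_{\tau_{m}}$ be the $n$-dimensional cones of $\mathcal{F}_{\boldsymbol{f}}$, each $\tau_{k}$ being a vertex of $\Gamma(\boldsymbol{f})$. The decoupage $\{\mathcal{L}^{-1}(\overline{\Delta}_{\tau_{k}})\}_{k}$ of $(0,1]^{n}$ gives
\[
I(s)=\sum_{k=1}^{m}I_{\overline{\Delta}_{\tau_{k}}}(s),
\]
to which Theorem \ref{Theorem4} applies summand by summand. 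Taking the minimum of the corresponding half-planes yields (1) directly, together with a preliminary version of (2) in which the candidate poles a priori range over $\bigcup_{\xi\in\mathfrak{D}(\Gamma(\boldsymbol{f}))\cup\Lambda_{\boldsymbol{f}}}\mathcal{P}(\xi)\cup(-(l+\mathbb{N}))$, with orders bounded by $n$.

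The main obstacle, and the heart of the proof, is to discard the contributions of the extra rays $\xi\in\Lambda_{\boldsymbol{f}}$. Following the strategy of \cite{D-S}, I would fix such a $\xi$, group together the maximal cones of $\mathcal{F}_{\boldsymbol{f}}$ containing $\xi$ as a generator, and compute explicitly the Laurent expansion of each $I_{\overline{\Delta}_{\tau_{k}}}(s)$ at a candidate pole $s_{0}=-(\sigma(\xi)+k)/d(\xi)$, using the explicit integral forms (\ref{I2}) and (\ref{I3}) produced in the proof of Theorem \ref{Theorem4}. The principal part attached to $\xi$ in each summand depends only on data read off the facet of the cone orthogonal to $\xi$, so summing over all maximal cones sharing $\xi$ produces a telescoping cancellation, and hence no true pole of $I(s)$ at $s_{0}$ unless $s_{0}$ already lies in $\mathcal{P}(\eta)$ for some $\eta\in\mathfrak{D}(\Gamma(\boldsymbol{f}))$. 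The extra difficulty over the scalar case of \cite{D-S} is that $|\boldsymbol{f}|_{\mathbb{R}}^{s}=(\sum_{i=1}^{l}f_{i}^{2})^{s/2}$ is not in monomial product form near the locus $\{\widetilde{f}_{1}=\dots=\widetilde{f}_{l}=0\}$; here the coordinate changes introduced in Subcases 2.2 and 2.3 of Theorem \ref{Theorem4} (together with Corollary \ref{Cor1}) will be essential to render the numerators of the residues in a form where the combinatorial cancellation of Denef--Sargos still applies.

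Finally, for (3), suppose $s_{0}$ is a pole of $I(s)$ of order $\rho$. Then at least one summand $I_{\overline{\Delta}_{\tau_{k}}}(s)$ must have a pole there of order at least $\rho$, and by Theorem \ref{Theorem4}(3) this forces at least $\rho$ (respectively $\rho-1$, when $s_{0}\in-(l+\mathbb{N})$) of the generators $a_{i}$ of $\overline{\Delta}_{\tau_{k}}$ to satisfy $s_{0}\in\mathcal{P}(a_{i})$. After the Denef--Sargos reduction, these $a_{i}$ may be taken in $\mathfrak{D}(\Gamma(\boldsymbol{f}))$, whence the facets $F(a_{i})$ of $\Gamma(\boldsymbol{f})$ intersect in a face of codimension $\rho$ (respectively $\rho-1$) containing the vertex $\tau_{k}$; this intersection is the face $\tau$ required in the statement.
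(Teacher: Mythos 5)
The overall skeleton — compose with $T_b$ to assume compatibility via Lemma \ref{compatibility}, decompose $I(s)=\sum_{\overline{\Delta}_\tau\in\mathcal{F}_{\boldsymbol{f}}}I_{\overline{\Delta}_\tau}(s)$, apply Theorem \ref{Theorem4} to each summand — is exactly the paper's route. However, your central claim about the mechanism for eliminating the extra rays $\Lambda_{\boldsymbol{f}}$ is wrong, and it misses the essential point of the Denef--Sargos technique that this whole section is built around.

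You assert that the decomposition gives, a priori, candidate poles in $\bigcup_{\xi\in\mathfrak{D}(\Gamma(\boldsymbol{f}))\cup\Lambda_{\boldsymbol{f}}}\mathcal{P}(\xi)\cup(-(l+\mathbb{N}))$, and that a telescoping cancellation of Laurent coefficients across the maximal cones sharing a given $\xi\in\Lambda_{\boldsymbol{f}}$ is then needed. That is not how the argument works, and no such cancellation lemma is available or required. The fan $\mathcal{F}_{\boldsymbol{f}}$ used in Theorems \ref{Theorem4} and \ref{Theorem5} is a \emph{simplicial} fan subordinated to $\Gamma(\boldsymbol{f})$, not a \emph{simple} one. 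As recalled in Section 3, each cone $\Delta_\tau$ of the polyhedral subdivision can be partitioned into simplicial cones each spanned by a part of $\mathfrak{D}(\Gamma(\boldsymbol{f}))$, with \emph{no extra rays added}; the rays $\Lambda_{\boldsymbol{f}}$ arise only if one further refines to a simple fan (as in the toric-resolution approach of Theorem \ref{Theorem2a}). The monomial change of variables $w$ giving (\ref{Iphi equal Idelta}) requires the $a_i$ to be linearly independent over $\mathbb{R}$, not to form a $\mathbb{Z}$-basis, so simpliciality suffices. Consequently every generator $a_i$ appearing in any cone of $\mathcal{F}_{\boldsymbol{f}}$ already lies in $\mathfrak{D}(\Gamma(\boldsymbol{f}))$, the candidate poles supplied by Theorem \ref{Theorem4}(2) are automatically confined to $\bigcup_{\xi\in\mathfrak{D}(\Gamma(\boldsymbol{f}))}\mathcal{P}(\xi)\cup(-(l+\mathbb{N}))$, and part (2) follows without any cancellation. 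The force of the Denef--Sargos result is precisely that $\Lambda_{\boldsymbol{f}}$ never enters; your proposed proof reintroduces $\Lambda_{\boldsymbol{f}}$ and then tries to remove it by an unproved mechanism.

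For part (3), your sketch correctly invokes Theorem \ref{Theorem4}(3) on a single cone, but the passage from "at least $\rho$ of the generators $a_i$ of some cone satisfy $s_0\in\mathcal{P}(a_i)$" to "there exists a face $\tau$ of $\Gamma(\boldsymbol{f})$ of codimension $\rho$ such that $s_0\in\mathcal{P}(\xi)$ for \emph{all} facets $F(\xi)\supset\tau$" is the nontrivial combinatorial step. The paper does not reprove it but cites Denef--Sargos, Th\'eor\`eme 6.1(iii) together with Corollaire 2.2; your version leaves this step unaddressed. Also note that "the $a_i$ may be taken in $\mathfrak{D}(\Gamma(\boldsymbol{f}))$ after the Denef--Sargos reduction" is again the wrong way round: they lie in $\mathfrak{D}(\Gamma(\boldsymbol{f}))$ from the start, by the choice of simplicial fan.
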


\begin{proof}
Pick a simplicial fan $\mathcal{F}_{\boldsymbol{f}}$ subordinated to
$\Gamma(\boldsymbol{f})$. By Lemma \ref{compatibility} there exists
$b\in(\mathbb{R}_{+}\setminus\{0\})^{n}$ such that $\boldsymbol{f}\circ T_{b}$
is compatible with $\mathcal{F}_{\boldsymbol{f}}$. Since the change of
variables $x=T_{b}(y)$ does not affect the meromorphic continuation of $I(s)$,
we may assume that $\boldsymbol{f}$ is compatible with the decoupage
$\mathcal{L}^{-1}(\mathcal{F}_{\boldsymbol{f}})$. Parts (1) and (2) follow
immediately from Theorem \ref{Theorem4} by the decomposition
\[
I(s)=\int_{\mathbb{R}_{+}^{n}}\Phi(x)\left\vert \boldsymbol{f}(x)\right\vert
_{\mathbb{R}}^{s}\,dx=\sum_{\overline{\Delta}_{\tau}\in\mathcal{F}
}I_{\overline{\Delta}_{\tau}}(s).
\]
Part (3) also follows from Theorem \ref{Theorem4} by exactly the same argument
as in the proof of \cite[Th\'eor\`eme 6.1(iii)]{D-S}, using \cite[Corollaire
2.2]{D-S}.
\end{proof}


\end{document}